
\documentclass[preprint,12pt]{elsarticle}
\usepackage{graphicx}
\usepackage{subfigure}
\usepackage{amsmath, stmaryrd}
\usepackage{mathtools, amsfonts, amssymb, amsthm}
\usepackage{enumerate, url}
\usepackage[margin=1in]{geometry}
\usepackage[normalem]{ulem}
\usepackage{caption}
\usepackage{algorithm}
\usepackage{algpseudocode}
\usepackage{mleftright}
\usepackage{tikz-cd}
\usepackage[normalem]{ulem}
\usepackage{hyperref}
\usepackage{appendix}
\usepackage{pgf}
\usepackage{kbordermatrix}
\usepackage{tikz}
\usetikzlibrary{arrows,automata}
\usepackage{booktabs}

\usepackage{mathtools}
\usepackage{color}
\usepackage{graphicx}
\usepackage{pgfplots}
\usetikzlibrary{shapes.geometric}

\usepackage{xcolor}
\hypersetup{
    colorlinks,
    linkcolor={red!50!black},
    citecolor={blue!50!black},
    urlcolor={blue!80!black}
}

\newtheorem{theorem}{Theorem}[section]
\newtheorem{lemma}[theorem]{Lemma}
\newtheorem{corollary}[theorem]{Corollary}
\newtheorem{proposition}[theorem]{Proposition}
\theoremstyle{definition}
\newtheorem{definition}[theorem]{Definition}
\newtheorem{example}[theorem]{Example}
\newtheorem{remark}[theorem]{Remark}

\newcommand{\tp}{{\scriptscriptstyle\mathsf{T}}}

\let\rank\undefined
\DeclareMathOperator{\rank}{rank}


\DeclareFontFamily{U} {MnSymbolC}{}
\DeclareFontShape{U}{MnSymbolC}{m}{n}{
  <-6> MnSymbolC5
  <6-7> MnSymbolC6
  <7-8> MnSymbolC7
  <8-9> MnSymbolC8
  <9-10> MnSymbolC9
  <10-12> MnSymbolC10
  <12-> MnSymbolC12}{}
\DeclareFontShape{U}{MnSymbolC}{b}{n}{
  <-6> MnSymbolC-Bold5
  <6-7> MnSymbolC-Bold6
  <7-8> MnSymbolC-Bold7
  <8-9> MnSymbolC-Bold8
  <9-10> MnSymbolC-Bold9
  <10-12> MnSymbolC-Bold10
  <12-> MnSymbolC-Bold12}{}

\DeclareSymbolFont{MnSyC} {U} {MnSymbolC}{m}{n}

\DeclareMathSymbol{\plus}{\mathrel}{MnSyC}{20}

\newcommand{\red}[1]{{#1}}
\newcommand{\cyan}[1]{{#1}}

\newcommand{\Rmnum}[1]{\expandafter\@slowromancap\Romannumeral #1@}

\newcommand{\overlongrightarrow}[2]
{\vbox{\offinterlineskip
      \halign{##\cr
              \hfill #1 \hfill\cr
              \hbox to #2{\relax\rightarrowfill}\cr}}\ignorespaces
}
\newcommand{\overlongleftarrow}[2]
{\vbox{\offinterlineskip
      \halign{##\cr
              \hfill #1 \hfill\cr
              \hbox to #2{\relax\leftarrowfill}\cr}}\ignorespaces
}



\usepackage{amssymb}


\journal{Applied and Computational Harmonic Analysis}

\begin{document}

\begin{frontmatter}



\title{ Computing sparse Fourier sum of squares on finite abelian groups in quasi-linear time\thanks{\quad  Ke Ye and  Lihong  Zhi are  supported by the National Key Research Project of China 2018YFA0306702. Lihong Zhi is supported by  the National Natural Science Foundation of China 12071467.}}

 \author[label1]{Jianting Yang}
  \author[label2]{Ke Ye}
    \author[label2]{Lihong Zhi}
 \affiliation[label1]{organization={CNRS@CREATE},
             addressline={1 CREATE Way,},
             city={Singapore},
             postcode={138602},
             state={Singapore},
             country={Singapore}}

 \affiliation[label2]{organization={ Key Lab of Mathematics Mechanization, AMSS, University of Chinese Academy of Sciences},
             addressline={No.55 Zhongguancun East Road},
             city={Beijing},
             postcode={100190},
             state={Beijing},
             country={China}}

\begin{abstract}
The problem of verifying the nonnegativity of a function on a finite abelian group is a long-standing challenging problem.  
 The basic theory of representation theory of finite groups indicates that a function $f$ on a finite abelian group $G$ can be written as a linear combination of characters of irreducible representations of $G$ by $ f(x)=\sum_{\chi \in \widehat{G}}  \widehat{f} (\chi)\chi(x)$,
where $\widehat{G}$ is the dual group of $G$ consisting of all characters of $G$ and $ \widehat{f} (\chi)$ is the \emph{Fourier coefficient} of $f$ at $\chi \in \widehat{G}$. In this paper, we show that by performing the fast (inverse) Fourier transform, we are able to compute a sparse Fourier sum of squares (FSOS) certificate of $f$ on a  finite abelian group  $G$ with complexity \if \cyan{$\operatorname{O}\left(|G| \log(|G|)+\log(k_{\min})\operatorname{SDP}(2k_{\min})\right)$},\fi that is quasi-linear in the order of $G$ and polynomial in the FSOS sparsity \if $k_{\min}$\fi of $f$. Moreover, for a nonnegatvie function $f$ on a finite abelian group $G$ \red{and} a set $S \subset \widehat{G}$, we give a lower bound of the constant $M$ such that  $f+M$ admits an FSOS \red{supported on} $S$. We demonstrate the efficiency of the proposed algorithm by numerical experiments on various abelian groups of orders up to $10^7$. \red{As applications, we also solve some combinatorial optimization problems and the sum of Hermitian squares (SOHS) problem \if on $\mathbb{T}^n$\fi by \cyan{sparse} FSOS.}
\end{abstract}



\begin{keyword}
Abelian group\sep Chordal graph\sep Convex optimization,\sep Fast Fourier transform, Fourier sum of squares\sep Graph theory, Quasi-linear algorithm\sep  Semidefinite programming\sep Sparse Gram matrices


\end{keyword}

\end{frontmatter}



\section{Introduction}
Let $X$ be a finite set and let $F$ be a nonnegative real-valued function on $X$. This  paper concerns the sum of squares (SOS) sparsity of $F$ in the sense of Fourier support \cite{fawzi2016sparse}. More precisely, if we equip $X$ with an abelian group structure, i.e., we choose a finite abelian group $G$ together with a bijection $\varphi:G \mapsto X$, then $f\coloneqq F\circ \varphi$ can be naturally identified with an element in the group algebra $\mathbb{C}[G]$. The basic theory \cite[Chapter~1]{fulton2013representation} of representation theory of finite groups indicates that the function $f$ can be written as a linear combination of \emph{characters} of irreducible representations of $G$:
\begin{equation*}
  f(x)=\sum_{\chi \in \widehat{G}}  \widehat{f} (\chi)\chi(x), \quad  x \in G,
\end{equation*}
where $\widehat{G}$ is the dual group of $G$ consisting of all characters of $G$ and $ \widehat{f} (\chi)$ is the \emph{Fourier coefficient} of $f$ at $\chi \in \widehat{G}$. A subset $S\subseteq \widehat{G}$ containing those $\chi \in \widehat{G}$ such that $\widehat{f}(\chi) \neq 0$ is called a \emph{Fourier support} of $f$. An Fourier sum of squares (FSOS) certificate of the nonnegative function $f$ on finite abelian group $G$ is in form of 
\[ f=\sum_{i\in I} |g_i|^2, \]
where $\{g_i\}_{i\in I}$ is a finite family  of functions  on group $G$. Clearly, an  FSOS certificate of $f$ indeed proves that $f$ is a nonnegative function on $G$. With the aim of reducing computational complexity, this paper focuses on computing a sparse FSOS certificate.

\if
Obviously, different choices of bijections $\varphi: G\to X$ would result in different FSOS sparsity of $F$. In fact, this simple observation is the impetus of this work. Our goal is twofold:
\begin{enumerate}[(i)]
\item to propose a quasi-linear algorithm for sparse FSOS;
\item to provide a better upper bound for the FSOS sparsity.
\end{enumerate}
On the one hand, from a practical point of view, there is no doubt that an algorithm for sparse FSOS certificate is of great importance, just as its counterpart in polynomial optimization  \cite{WKKM06,WLT18,ZFP19,wang2021chordal}. However, as far as we know, no such algorithm, let alone an efficient one, can be found in existing works. In Section~\ref{sec:algorithms}, we present Algorithm~\ref{alg:1} for finding a universal sparse FSOS certificate of a nonnegative function on a finite abelian group. Here a universal sparse FSOS refers to a sparse FSOS which is independent of specific values of Fourier coefficients of the function. Unfortunately, Theorem~\ref{thm:support=feasible} indicates that the problem of finding a universal sparse FSOS certificate is equivalent to solving the integer linear programming problem \eqref{ILP}. As it is well-known, an integer linear programming problem is in general NP-complete \cite{schrijver1998theory}. Thus it is unrealistic to expect Algorithm~\ref{alg:1} to be efficient. Instead, we propose Algorithm~\ref{alg:4} for a sparse FSOS, which has quasi-linear complexity since it takes specific values of Fourier coefficients of the function into account.

On the other hand, it is also imperative to theoretically bound the FSOS sparsity of a given nonnegative function on a finite abelian group. In the literature, such an upper bound is only known for  two extreme cases: $\mathbb{Z}_2^n$ and $\mathbb{Z}_N$. We refer interested readers to \cite{sakaue2017exact} and \cite{fawzi2016sparse} for  more details. In fact, even in these two cases, upper bounds of the FSOS sparsity can be extremely different, as we will see in the following example: Suppose that $N = 2^n$ and $n = 2^k$ for $k\ge 1$. Let $\mathcal{F} $ be the function space $\{f:\mathbb{Z}_2^n \rightarrow \mathbb{C},~\deg(f) \leq r=2 \}$ whose dimension is
\[
\dim \mathcal{F} = \binom{n}{0} + \binom{n}{1} + \binom{n}{2}  = 1 + 2^{k-1}(2^k + 1) \le 2^{2k}.
\]
By \cite[Theorem 3.2]{sakaue2017exact}, the degree bound of the FSOS certificate of a nonnegative function $f\in \mathcal{F} $ is
\[
s = \lceil (n+r - 1)/2 \rceil = 2^{k-1} + 1,
\]
which implies that the FSOS sparsity  of $f$ (as a function on $\mathbb{Z}_2^n$) is at most
\[
c(\mathbb{Z}^n_2) \coloneqq \sum_{i=0}^{2^{k-1}+ 1} \binom{n}{i}.
\]
Let $\mathcal{F}'$ be the function space $\{g:\mathbb{Z}_N \rightarrow \mathbb{C},~ \deg(g) \leq d = 2^{2k-1} \}$. Then \cite[Theorem 3]{fawzi2016sparse} implies that the FSOS sparsity of a nonnegative function $g\in \mathcal{F}'$ is at most
\[
c(\mathbb{Z}_N) \coloneqq 3 d \log_2(N/d) = 3 (2^k - 2k + 1) 2^{2k-1}.
\]
When $k
\ge 4$, we obviously have $c(\mathbb{Z}^n_2)  \ge c(\mathbb{Z}_N) $. In this vein, we prove in Section~\ref{sec:upper bound} the following two theorems, which provide an upper bound for the FSOS sparsity when $G$ is a product or a quotient of two finite abelian groups.
\begin{theorem}\label{Th1}
Let  $f:G_1 \times G_2 \rightarrow \mathbb{R}$ be a nonnegative function on a finite abelian group $G_1 \times G_2$, with support ${S}_1 \times {S}_2 \subseteq \widehat{G_1 \times G_2}$, where $  {S}_1 \subseteq \widehat{G}_1 $ and $  {S}_2 \subseteq \widehat{G}_2 $. If $\Gamma_1$ is a chordal cover of the Cayley graph $\operatorname{Cay}(\widehat{G}_1, {S}_1)$ with a Fourier support ${T}_1$ and ${S}_2$ is the generating set of the subgroup $\widehat{H} \subseteq \widehat{G}_2$, then $f$ has an SOS certificate with a Fourier support \[{T}={T}_1 \times \widehat{H}.\] In particular, we have $|{T}| \leq |{T}_1| \cdot |\widehat{H}|$.
\end{theorem}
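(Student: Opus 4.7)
The plan is to lift the given chordal cover $\Gamma_1$ of $\operatorname{Cay}(\widehat{G}_1, S_1)$ to an explicit chordal cover $\Gamma$ of $\operatorname{Cay}(\widehat{G}_1 \times \widehat{G}_2, S_1 \times S_2)$ whose Fourier support equals $T_1 \times \widehat{H}$, and then to invoke \cite[Theorem~1]{fawzi2016sparse} to obtain the desired FSOS certificate. Throughout I use the canonical identification $\widehat{G_1 \times G_2} \cong \widehat{G}_1 \times \widehat{G}_2$.

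Let $C_1^{(1)}, \ldots, C_{k_1}^{(1)}$ denote the maximal cliques of $\Gamma_1$, so that $T_1 = \bigcup_{i=1}^{k_1} C_i^{(1)} \cdot (C_i^{(1)})^{-1}$ by the definition of the Fourier support of a chordal cover. I then define $\Gamma$ to be the graph on $\widehat{G}_1 \times \widehat{G}_2$ whose maximal cliques are the products
\[
C_i^{(1)} \times g\widehat{H}, \qquad 1 \le i \le k_1,\ g\widehat{H} \in \widehat{G}_2/\widehat{H}.
\]
Because $S_2 \subseteq \widehat{H}$, the connected components of $\Gamma$ are exactly the blocks $\widehat{G}_1 \times g\widehat{H}$, one per coset of $\widehat{H}$.

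The next step is to check the two structural properties required of a chordal cover. If $(\chi_1, \chi_2)$ and $(\chi_1', \chi_2')$ are adjacent in $\operatorname{Cay}(\widehat{G}_1 \times \widehat{G}_2, S_1 \times S_2)$, then $\chi_1 (\chi_1')^{-1} \in S_1$ places $\chi_1, \chi_1'$ in a common maximal clique $C_i^{(1)}$ of $\Gamma_1$, and $\chi_2 (\chi_2')^{-1} \in S_2 \subseteq \widehat{H}$ places $\chi_2, \chi_2'$ in a common coset $g\widehat{H}$, so both vertices lie in the clique $C_i^{(1)} \times g\widehat{H}$ of $\Gamma$. As for chordality, since the blocks are pairwise edge-disjoint it suffices to treat a single block $\widehat{G}_1 \times g\widehat{H}$, which is the lexicographic product $\Gamma_1[K_{|\widehat{H}|}]$; a perfect elimination ordering is obtained by concatenating a PEO of $\Gamma_1$ with any fixed ordering of each fiber $\{v\}\times g\widehat{H}$, and one verifies in one line that the later neighbors form a clique, because each fiber is itself a clique and inter-fiber edges come in complete bicliques.

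Finally, as $\widehat{H}$ is a subgroup, $(g\widehat{H})\cdot(g\widehat{H})^{-1} = \widehat{H}$, so the Fourier support of $\Gamma$ is
\[
\bigcup_{i,\, g\widehat{H}} \bigl( C_i^{(1)} \times g\widehat{H} \bigr) \cdot \bigl( C_i^{(1)} \times g\widehat{H} \bigr)^{-1} = \Biggl(\bigcup_{i=1}^{k_1} C_i^{(1)} \cdot (C_i^{(1)})^{-1}\Biggr) \times \widehat{H} = T_1 \times \widehat{H}.
\]
Applying \cite[Theorem~1]{fawzi2016sparse} to $\Gamma$ then produces an FSOS certificate of $f$ with Fourier support $T = T_1 \times \widehat{H}$, and the bound $|T| \leq |T_1|\cdot|\widehat{H}|$ is immediate. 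The only mild obstacle is the chordality of the lexicographic-product block, which is already handled by the PEO argument sketched above.
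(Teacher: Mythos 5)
Your construction is exactly the one the paper uses: the graph $\Gamma$ you build on $\widehat G_1\times\widehat G_2$ is precisely $\Gamma_1\boxtimes K$, where $K$ completes each connected component of $\operatorname{Cay}(\widehat G_2,S_2)$ (these components are the cosets of $\widehat H$ by Lemma~\ref{lemma:component=coset}), and your chordality argument for a complete fiber product is sound. The problem is the step where you read off the Fourier support. You assert ``$T_1=\bigcup_{i=1}^{k_1} C_i^{(1)}\cdot(C_i^{(1)})^{-1}$ by the definition of the Fourier support of a chordal cover,'' but that is not the definition. In this paper a Fourier support of $\Gamma_1$ is any set $T_1$ such that for every maximal clique $C$ there is a shift $\chi_C\in\widehat G_1$ with $\chi_C C\subseteq T_1$; it is a given datum, not a formula in the cliques. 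Your set $\bigcup_i C_i^{(1)}(C_i^{(1)})^{-1}$ consists of quotients within cliques (size up to $|C_i|^2$ per clique), and is in general \emph{strictly larger} than a minimal shifted-clique cover. For instance, for the chordal cover of $\operatorname{Cay}(\widehat{\mathbb Z}_5,\{\chi_0,\chi_1,\chi_4\})$ with added edges $(\chi_0,\chi_2)$ and $(\chi_2,\chi_4)$, the maximal cliques are $\{\chi_0,\chi_1,\chi_2\}$, $\{\chi_2,\chi_3,\chi_4\}$, $\{\chi_0,\chi_2,\chi_4\}$, and a valid Fourier support is $T_1=\{\chi_0,\chi_1,\chi_2,\chi_4\}$ of size $4$, while $\bigcup_i C_i^{(1)}(C_i^{(1)})^{-1}=\widehat{\mathbb Z}_5$ has size $5$.

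Consequently your closing computation proves only that $\bigl(\bigcup_i C_i^{(1)}(C_i^{(1)})^{-1}\bigr)\times\widehat H$ is a Fourier support of $\Gamma$, not that the prescribed $T_1\times\widehat H$ is one, and therefore it does not establish the bound $|T|\le|T_1|\,|\widehat H|$ of the theorem. The correct (and short) argument is the paper's: for each maximal clique $C_i^{(1)}\times g\widehat H$ of $\Gamma$, pick $\chi_i\in\widehat G_1$ with $\chi_i C_i^{(1)}\subseteq T_1$ (this exists because $T_1$ is a Fourier support of $\Gamma_1$) and take the shift $(\chi_i,g^{-1})$; since $g^{-1}(g\widehat H)=\widehat H$, one gets $(\chi_i,g^{-1})\bigl(C_i^{(1)}\times g\widehat H\bigr)=\chi_i C_i^{(1)}\times\widehat H\subseteq T_1\times\widehat H$. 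With that replacement your proof is correct and coincides with the paper's.
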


Let $G$ be a finite abelian group and let $H$ be a subgroup of $G$.
 We denote by $\chi_0$ the trivial character, i.e., $\chi_0(x)=1$ for all $ x \in G$. We observe that the dual group $\widehat{H}$ of $H$ is a subgroup of $\widehat{G}$.
\begin{theorem}\label{Th2}
 Assume that $\widehat{H}$ is generated by ${S} \subseteq \widehat{G}$. If $\Gamma$ is a chordal cover of $\operatorname{Cay}(\widehat{H},{S})$ with Fourier support ${T}$, then $\Gamma \boxtimes \operatorname{Cay}(\widehat{G}/\widehat{H},\{\widehat{H}\})$ is a chordal cover of $\operatorname{Cay}({\widehat{G}},{S})$ with Fourier support ${T}$. Moreover, if $\Gamma$ is a chordal cover of $\operatorname{Cay}(\widehat{H},{S})$ with minimal number of edges, then  $\Gamma \boxtimes \operatorname{Cay}(\widehat{G}/\widehat{H},\{\widehat{H}\})$ is  a chordal cover of $\operatorname{Cay}({\widehat{G}},{S})$ with minimal number of edges.
\end{theorem}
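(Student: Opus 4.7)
The plan is to exploit the fact that the generating set $S$ lies inside $\widehat{H}$, so that $\operatorname{Cay}(\widehat{G},S)$ decomposes into pieces indexed by cosets of $\widehat{H}$. Concretely, for any $\chi \in \widehat{G}$ and $s \in S\subseteq \widehat{H}$ the edge $\{\chi,\chi s\}$ stays inside the coset $\chi \widehat{H}$, so $\operatorname{Cay}(\widehat{G},S)$ is a disjoint union of $[\widehat{G}:\widehat{H}]$ induced subgraphs, each canonically isomorphic to $\operatorname{Cay}(\widehat{H},S)$. In parallel, because $\widehat{H}$ is the identity element of $\widehat{G}/\widehat{H}$, the graph $\operatorname{Cay}(\widehat{G}/\widehat{H},\{\widehat{H}\})$ has no edges between distinct vertices; after choosing a splitting $\widehat{G}\cong \widehat{H}\oplus \widehat{G}/\widehat{H}$ (which exists because $\widehat{G}$ is a finite abelian group), the strong product $\Gamma \boxtimes \operatorname{Cay}(\widehat{G}/\widehat{H},\{\widehat{H}\})$ is exactly $[\widehat{G}:\widehat{H}]$ disjoint copies of $\Gamma$, one placed in each coset.

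Granted this picture, the first assertion reduces to three observations: each translated copy of $\Gamma$ covers the copy of $\operatorname{Cay}(\widehat{H},S)$ in its coset; a disjoint union of chordal graphs is chordal, since every cycle is confined to one connected component; and the Fourier support $T$ is preserved, because the construction of $T$ from the maximal cliques of $\Gamma$ depends only on character \emph{ratios} within each clique (a maximal clique $K\subseteq \widehat{H}$ becomes a translate $\chi K\subseteq \chi\widehat{H}$, and the ratio set $V(K)\cdot V(K)^{-1}$ is unchanged). Hence the coset copies all contribute the same $T$, and that is the Fourier support of the strong product.

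For the minimality claim I will argue by cosetwise restriction. Let $\Gamma'$ be an arbitrary chordal cover of $\operatorname{Cay}(\widehat{G},S)$ and let $\Gamma'_\chi$ denote its induced subgraph on the coset $\chi \widehat{H}$. Induced subgraphs of chordal graphs are chordal, and $\Gamma'_\chi$ still covers the copy of $\operatorname{Cay}(\widehat{H},S)$ sitting in $\chi \widehat{H}$, so if $\Gamma$ is a minimum chordal cover of $\operatorname{Cay}(\widehat{H},S)$ then $|E(\Gamma'_\chi)|\ge |E(\Gamma)|$ for every coset. Summing over the $[\widehat{G}:\widehat{H}]$ cosets and discarding any cross-coset edges of $\Gamma'$ gives $|E(\Gamma')|\ge [\widehat{G}:\widehat{H}]\cdot |E(\Gamma)|$, which is exactly the edge count of $\Gamma \boxtimes \operatorname{Cay}(\widehat{G}/\widehat{H},\{\widehat{H}\})$; therefore the product attains the minimum.

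The main subtlety I anticipate is pinning down the precise definition of ``Fourier support of a chordal cover'' as used in the paper and verifying that it is invariant under multiplicative translation of the character labels. Once that is in hand, the rest of the argument is a clean cosetwise decomposition.
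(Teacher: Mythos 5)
Your overall strategy — decompose $\operatorname{Cay}(\widehat{G},S)$ cosetwise into copies of $\operatorname{Cay}(\widehat{H},S)$, identify the strong product with $[\widehat{G}:\widehat{H}]$ disjoint copies of $\Gamma$, observe that the notion of Fourier support is invariant under multiplicative translation of clique labels, and prove minimality by restricting an arbitrary cover to each coset — is essentially the paper's argument. Your minimality step is a mild reorganization of theirs (explicit summation over cosets rather than a pigeonhole contradiction), and it is sound; the key fact that induced subgraphs of chordal graphs are chordal is used in both.

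The one genuine slip is your claim that a group splitting $\widehat{G}\cong\widehat{H}\oplus\widehat{G}/\widehat{H}$ exists because $\widehat{G}$ is a finite abelian group. This is false in general: take $\widehat{G}=\mathbb{Z}_4$ and $\widehat{H}=\{\chi_0,\chi_2\}\cong\mathbb{Z}_2$, so that $\widehat{G}/\widehat{H}\cong\mathbb{Z}_2$, yet $\mathbb{Z}_4\not\cong\mathbb{Z}_2\oplus\mathbb{Z}_2$. (And since every finite abelian group arises as a dual group, this is not a corner case.) What the argument actually needs is much weaker and always available: a \emph{set-theoretic} section $\varphi:\widehat{G}/\widehat{H}\to\widehat{G}$ choosing one representative per coset, which induces the vertex bijection $\psi(\chi',\chi\widehat{H})=\chi'\,\varphi(\chi\widehat{H})$. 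Under $\psi$ the strong product becomes a chordal cover of $\operatorname{Cay}(\widehat{G},S)$, and each maximal clique $C'\times\{\chi\widehat{H}\}$ maps to a translate of $C'$, so the character-ratio observation you made still applies. The paper itself flags this exact subtlety, remarking that its $\psi$ is a bijection of sets but not a group isomorphism. Replace the appeal to a nonexistent group splitting by this set-theoretic section and your proof is correct.
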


Before we proceed, we would like to emphasize the importance of Theorems~\ref{Th1} and \ref{Th2}. Due to  the fundamental theorem of finite abelian groups, $\mathbb{Z}_N$ and $\mathbb{Z}_2^n$ are not the only possible abelian group structures one can impose on a finite set. For instance, there exist $3$ non-isomorphic abelian group structures on a set of cardinality $8$: $\mathbb{Z}_8$, $\mathbb{Z}_2 \times \mathbb{Z}_4$ and $\mathbb{Z}_2^3$. Moreover, Theorems~\ref{Th1} and \ref{Th2} provide us very useful tools to obtain an upper bound for the cardinality of a Fourier support of a nonnegative function on a finite abelian group. Even in the case of $\mathbb{Z}_2^n$, which is extensively studied in the literature \cite{amrollahi2019efficiently, fawzi2016sparse,kurpisz2019sum}, upper bounds obtained by Theorems~\ref{Th1} and \ref{Th2} could be  much better than previously known ones. As an example, we consider the function $f(x)=2-(x_1+x_2)\prod_{i=3}^n x_i$ on $\mathbb{Z}_2^n =\{-1,1\}^{n }$. It is clear that  $f$ is nonnegative. According to \cite[Theorem~3]{fawzi2016sparse}, the support of the sum-of-squares certificate of $f$ is contained in
 \[
 \{g \in \widehat{\mathbb{Z}}_2^{n}; \deg(g)\leq n-1\},
 \]
which has $2^n-1$ elements. However, if we regard ${\mathbb{Z}}_2^n$ as the product of  ${\mathbb{Z}}_2^{2}$ and ${\mathbb{Z}}_2^{n-2}$ and let ${S}_1=\{1,x_1,x_2\}$ and ${S}_2=\{1,\prod_{i=3}^n x_i\}$, then Theorem~\ref{Th1} implies that $f$ has an SOS certificate with only $6$  terms:
\[
\{1,x_1,x_2,\prod_{i=3}^n x_i,x_1\prod_{i=3}^n x_i,x_2\prod_{i=3}^n x_i\}.
\]
Furthermore, if we take  ${S}=\left\{1,x_1\prod_{i=3}^n x_i,x_2\prod_{i=3}^n x_i\right\}$ and $ \widehat{H}=\left\{1,x_1x_2,x_1\prod_{i=3}^n x_i,x_2\prod_{i=3}^n x_i\right\}$. According to Theorem \ref{Th2}, we may conclude that $f$ has an SOS certificate of cardinality three:
\[
\{1,x_1\prod_{i=3}^n x_i,x_2\prod_{i=3}^n x_i\}.
\]
To be more concrete, if $n = 4$, then the function $f(x_1,x_2,x_3,x_4) =2- x_1 x_3 x_4- x_2 x_3 x_4$ can be written as
\[
\frac{1}{4} \left( -{ x_1}\,{ x_3}\,{ x_4}+{ x_2}\,{ x_3}\,{ x_4
} \right) ^{2}+ \left(\frac{1}{2}{ x_1}\,{ x_3}\,{ x_4}
-1+\frac{1}{2}{ x_2}\,{ x_3}\,{ x_4} \right) ^{2},
\]
which gives a desired SOS certificate. Here the chordal cover of $\operatorname{Cay}(\widehat{H},{S})$ and $\operatorname{Cay}(\widehat{\mathbb{Z}}_2^4/\widehat{H},\{\widehat{H}\})$ are shown in Figure~\ref{fig:ex}.

\begin{figure}[h!]
  \centering
 \subfigure[Chordal cover of  $\operatorname{Cay}(\widehat{H},{S})$]{
 \begin{tikzpicture}[main/.style = {draw, circle,minimum width={45pt},}]
\node[main] (1) {$1$};
\node[main] (2) at	(	3	,	-3	)	{$x_1x_3x_4$};
\node[main] (3) at	(	0	,	-6	)	{$x_1x_2$};
\node[main] (4) at	(	-3	,	-3	)	{$x_2x_3x_4$};
\draw (1) -- (2);
\draw (2) -- (3);
\draw (3) -- (4);
\draw (2) -- (4);
\draw (1) -- (4);
\end{tikzpicture}
 }
 \quad
  \subfigure[$\operatorname{Cay}(\widehat{\mathbb{Z}}_2^4/\widehat{H},\{\widehat{H}\})$]{
  \begin{tikzpicture}[main/.style = {draw, circle,minimum width={45pt},}]
\node[main] (1) {$\widehat{H}$};
\node[main] (2) at	(	3	,	-3	)	{$x_3x_4\widehat{H}$};
\node[main] (3) at	(	0	,	-6	)	{$x_1x_4\widehat{H}$};
\node[main] (4) at	(	-3	,	-3	)	{$x_1x_3\widehat{H}$};
\end{tikzpicture}
  }
  \caption{Chordal cover of $\operatorname{Cay}(\widehat{H},{S})$ and $\operatorname{Cay}(\widehat{\mathbb{Z}}_2^4/\widehat{H},\{\widehat{H}\})$}
  \label{fig:ex}
\end{figure}
\fi 

In \cite{fawzi2016sparse,sakaue2017exact}, based on graph theory, the authors provide interesting  theoretical bounds on the sparsity of FSOS. In this paper, we \red{focus on the computational aspect of FSOS}. \if The complexity of the algorithm is quasi-linear in the order of $G$ and polynomial in the FSOS sparsity  of $f$.\fi Our main contributions are as follows:
\begin{enumerate}
    \item \red{We first formulate the problem of computing sparse FSOS as the optimization problem \eqref{Min:L0GramMatrix}. Next we prove in Theorem~\ref{lem:square root}
    that the square root of $f$ provides a closed-form solution to a properly formulated convex relaxation of \eqref{Min:L0GramMatrix}. Based on that, we design Algorithm~\ref{alg:4} to compute a sparse FSOS of a nonnegative function in quasi-linear time. Numerical experiments are presented in Tables~\ref{table4.1}, ~\ref{table4.2},~\ref{table4.3} to demonstrate the efficiency of our algorithm. }

    \item  \red{In Theorem~\ref{lem:square root}, we only select terms of large magnitude in $\sqrt{f}$ to compute a sparse FSOS.} \if We propose to  select  only  terms  in $\sqrt{f}$ with large absolute value of Fourier coefficients for computing sparse FSOS.\fi We expound the reasons why this heuristic term selection strategy  works very well in practice in Theorem~\ref{thm:square-root_based_error}. Furthermore, we show in Proposition~\ref{prop:invariant-isomorphism} that the terms selected by our method remain unchanged under the group isomorphism.
   
    \item \red{Applications of FSOS to combinatorial optimization problems and the sum of Hermitian squares (SOHS) problem are presented}. Remarkably, we show that one can prove the pigeon-hole principle by an FSOS certificate of sparsity $O(n^2)$ in Proposition~\ref{prop6.2}. \red{As a comparison}, any resolution refutation requires \red{exponentially many} inference steps to prove the pigeon-hole principle 
    \cite[Theorem 16, Corollary 18]{bonet2007resolution}. \red{In Theorem~\ref{Thm:lift}, we give sufficient conditions for the existence of a lifting of FSOS on a finte abelian group to SOHS on \cyan{unit circles in complex planes}. Moreover, Example~\ref{motzkin} indicates that such a lifting may provide a much simpler certificate for the nonnegativity of polynomials on cubes.}
\end{enumerate}
\red{We remark that although the graph theoretic approach in \cite{fawzi2016sparse} can be turned into an algorithm to compute sparse FSOS, it is inefficient since it only uses the Fourier support of the given function. However, more information about FSOS sparsity can be acquired by exploring the structure of coefficients of that function and this observation eventually leads to our Algorithm~\ref{alg:4}.}

 The rest of the paper is organized as follows: In Section~\ref{sec:algorithms}, we 
establish a proper formulation of the convex relaxation of  FSOS sparsity minimization problem. 
In Section~\ref{subsec:numerics},  we give an algorithm  to  compute a sparse FSOS of a given nonnegative function. Numerical experiments are provided to demonstrate the  correctness and efficiency of our algorithm. 
In section~\ref{sec:square-root:bound}, we present an error analysis \red{to validate} the term selection strategy in the algorithm. 
 In Section~\ref{sec6}, \red{we discuss applications of FSOS to combinatorial optimization problems and the SOHS probelm}. 


\section{Preliminaries}\label{sec:pre}
In this section,  we recall some basic definitions and results in representation theory \cite{rudin1962fourier,fulton2013representation} and  graph theory \cite{hammack2011handbook,kakimura2010direct,vandenberghe2015chordal}.

\subsection{group theory and representation theory}
Let $G$ be a finite abelian group and let $H\subseteq G$ be a subgroup of $G$. 
 A  nonzero complex valued function $\chi$ on $G$ is called a \emph{character} of $G$ if it satisfies:
\begin{equation*}
  \chi(xy)=\chi(x)\chi(y),  \quad x,y \in G.
  \end{equation*}
 The set $\widehat{G}$ of all characters of $G$ is called the \emph{dual group} of $G$. A subset $S\subseteq \widehat{G}$ is called symmetric if $\chi \in S$ implies $\chi^{-1} \in S$. It is straightforward to verify that $\widehat{G}$ is a finite abelian group, with the group operation given by pointwise multiplication. Since $G$ is a finite abelian group, all irreducible representations of $G$ are one dimensional. Hence we may identify $\widehat{G}$ with the set of all irreducible representations of $G$.

The fundamental theorem \cite{dummit2004abstract} of finite abelian groups implies that
\[
G \simeq \mathbb{Z}_{n_1} \times \cdots \times \mathbb{Z}_{n_k}.
\]
For any positive integer $d$, we also have
\[
\widehat{\mathbb{Z}_d}= \{\chi_l(x) \coloneqq \exp^{\frac{2i \pi l x}{d}}, l=0,\dots, d-1\}.
\]
Moreover, if $G_1,G_2$ are  finite abelian groups, then $\widehat{G_1 \times G_2} = \widehat{G_1} \times \widehat{G_2}$. Therefore we can regard each $\chi\in \widehat{G}$ as
\[
\chi (x_1,\dots, x_k) = \prod_{j=1}^k \exp^{\frac{2i\pi l_j x_j}{n_j}},\quad (x_1,\dots, x_k)\in \mathbb{Z}_{n_1} \times \cdots \times \mathbb{Z}_{n_k},
\]
for some $0 \le l_j \le n_j - 1, 1\le j \le k$. Accordingly, $\chi^{-1}$ is identified with
\[
\chi^{-1}(x_1,\dots, x_k) = \prod_{j=1}^k \exp^{\frac{-2i\pi l_j x_j}{n_j}},\quad (x_1,\dots, x_k)\in \mathbb{Z}_{n_1} \times \cdots \times \mathbb{Z}_{n_k}.
\]
\if In this paper, $\chi_l$ denotes the function
\[\chi_l:\mathbb{Z}_N\rightarrow \mathbb{C},~ \chi_l(x)=\exp^{\frac{2i \pi l x}{n}},\]
 which is an element in $\widehat{\mathbb{Z}}_N=\{\chi_l:l=0,1,2,...,N-1\}$.\fi
In particular, {$\widehat{\mathbb{Z}_2^n}$} consists of square-free monomials in $n$ variables. We have the following theorem for functions on finite abelian groups.
\begin{theorem}\cite[Chapter~1]{fulton2013representation}\label{thm:Fourier expansion}
\red{Let $G$ be a finite abelian group. Any function $f:G\to \mathbb{C}$ can be uniquely written as a linear combination of elements in $\widehat{G}$, i.e., there is a unique $ \widehat{f}: \widehat{G}\to \mathbb{C}$ such that 
\begin{equation}\label{thm:Fourier expansion:eq}
f=\sum_{\chi \in \widehat{G}} \widehat{f}(\chi)\chi.
\end{equation}
}
\end{theorem}
\red{The unique expansion of $f$ in \eqref{thm:Fourier expansion:eq} is called the \emph{Fourier expansion} of $f$. We define the \emph{support} of $f$ by $\operatorname{supp}(f)\coloneqq \{\chi: \widehat{f}(\chi)\neq 0\}$. The cardinality of  $\operatorname{supp}(f)$ is called the \emph{sparsity of $f$}.}

\subsection{Fourier sum of squares  of functions on finite abelian groups}

In this subsection, we briefly summarize the theory of Fourier sum of squares (FSOS)  developed in \cite{fawzi2016sparse,sakaue2017exact}.  The definition of FSOS is as follows:

\begin{definition}\cite{fawzi2016sparse}
Let $f$ be a nonnegative function on finite abelian group $G$, i.e $f(x)\geq 0$ for all $x \in G$, then  an FSOS representation of $f$ is in form of
\[ f=\sum_{i\in I} |g_i|^2, \]
where $\{g_i\}_{i \in I}$ is a finite family of functions on $G$.
\end{definition}

Clearly, an FSOS representation of $f$   provides a certificate of nonnegativity of $f$, making it of significantly valuable in both mathematics and computer science. The close relationship between the  FSOS representation and semidefinite programming problem is highlighted in \cite{fawzi2016sparse}. The following theorem, stated in \cite{fawzi2016sparse}, elucidates this connection.
\begin{proposition}\cite[Proposition 1]{fawzi2016sparse}\label{prop:Gram}
Let $f$ be a real-valued function on finite abelian group $G$, then $f$ has an FSOS representation if and only if there exists a Hermitian positive semidefinite matrix $Q\in \mathbb{C}^{|G|\times |G|}$, with rows and columns indexed by $\widehat{G}$, such that
\begin{equation}\label{eq:Gram:conditions}
 \widehat{f}(\chi)=\sum_{\chi=\chi'^{-1}\chi''}Q(\chi',\chi''),~\forall \chi \in \widehat{G},
\end{equation}
where $Q(\chi',\chi'')$ is the element of $Q$ with rows and columns indexed by $\chi'$ and $\chi''$. A  Hermitian positive semidefinite matrix  $Q\in \mathbb{C}^{|G|\times |G|}$ that satisfies the aforementioned conditions is called a \emph{Gram matrix} of $f$.
\end{proposition}

Moreover, for any finite abelian group $G$ and any nonnegative function $f$ on $G$,  \cite{fawzi2016sparse} directly provides the following Gram matrix:

\begin{proposition}\cite[Proposition 3]{fawzi2016sparse}\label{prop:priori-Gram-matrix}
Let $f$ be a nonnegative function on finite abelian group $G$, define the Hermitian matrix $Q\in \mathbb{C}^{|G|\times |G|}$,  $Q(\chi,\chi')=\frac{1}{|G|}\widehat{f}(\chi^{-1}\chi')$, then $Q$ is a Gram matrix of $f$.
\end{proposition}

 For ease of reference, we record below two known results about upper bound of the FSOS sparsity of nonnegative functions on special groups.
\begin{theorem}\cite[Theorem 3]{fawzi2016sparse}\label{thm:Parrilo}
Let $N,d$ be positive integers such that $d$ divides $N$. Then there exists ${T} \subseteq \widehat{\mathbb{Z}}_N$ with $\lvert {T}\rvert \le 3 d \log_2(N/d)$ such that any nonnegative function on $\mathbb{Z}_N$ of degree at most $d$ has an FSOS with support ${T}$.
\end{theorem}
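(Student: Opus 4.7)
The plan is to reduce the claim to an explicit chordal-cover construction and then invoke the SOS-from-chordal-cover machinery already cited in the introduction. Every nonnegative function $f$ on $\mathbb{Z}_N$ of degree at most $d$ has its Fourier support contained in $S_d \coloneqq \{\chi_l : -d \le l \le d\} \subseteq \widehat{\mathbb{Z}}_N$, so by \cite[Theorem~1]{fawzi2016sparse} it suffices to exhibit a single chordal cover $\Gamma$ of the Cayley graph $\operatorname{Cay}(\widehat{\mathbb{Z}}_N, S_d)$ together with a Fourier support $T \subseteq \widehat{\mathbb{Z}}_N$ of $\Gamma$ satisfying $|T| \le 3d \log_2(N/d)$. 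This single choice of $T$ will then work simultaneously for every $f$ in the stated class.

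For the construction, I would exploit the divisibility $d \mid N$ via a dyadic/hierarchical scheme. Write $M = N/d$, and for notational ease first reduce to the case $M = 2^m$ by rounding $m = \lceil \log_2 M\rceil$ at the end. Identify $\widehat{\mathbb{Z}}_N \cong \mathbb{Z}_N$ with a cyclic arrangement partitioned into $M$ consecutive \emph{blocks} of length $d$; vertices within a single block are already pairwise adjacent in $\operatorname{Cay}(\widehat{\mathbb{Z}}_N, S_d)$ because they lie within cyclic distance $d$ of one another. At layer $k \in \{0, 1, \ldots, m-1\}$ I would then add chord edges joining each pair of neighboring scale-$(d\cdot 2^k)$ blocks through a common \emph{anchor} set $A_k \subseteq \widehat{\mathbb{Z}}_N$ of at most $3d$ designated characters, reusing the same anchor pattern along the cyclic orbit by translation invariance.

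Chordality should then be verified by producing a perfect elimination ordering, via the criterion \cite[Theorem~4.1]{vandenberghe2015chordal} recalled in Section~\ref{subsec:graph}: peel off vertices from the finest layer outward, so that whenever a vertex is eliminated its remaining neighbors have already been made pairwise adjacent by anchor chords installed at a coarser layer. Declaring $T \coloneqq \bigcup_{k=0}^{m-1} A_k$ and checking the Fourier-support condition separately at each layer yields a Fourier support of $\Gamma$, and a union bound gives
\[
|T| \;\le\; \sum_{k=0}^{m-1}|A_k| \;\le\; 3d \cdot m \;=\; 3d \log_2(N/d),
\]
as required.

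The principal obstacle is the combinatorial design of the anchor sets $A_k$, which must simultaneously (i) install enough chord edges at layer $k$ to eliminate every chordless cycle that arises from the merging of adjacent scale-$(d\cdot 2^{k-1})$ blocks without creating new chordless cycles at coarser layers, (ii) satisfy the Fourier-support condition from Section~\ref{subsec:graph}, so that every maximal clique $C$ of $\Gamma$ admits a character $\chi_C \in \widehat{\mathbb{Z}}_N$ with $\chi_C C \subseteq T$, and (iii) respect the per-layer budget of $3d$ characters. It is the need to meet (i)--(iii) simultaneously, under translation invariance, that forces the constant factor $3$ in $3d \log_2(N/d)$; improving this factor by adopting alternative abelian group structures on the underlying set is precisely what motivates the product and quotient constructions of Theorems~\ref{Th1} and~\ref{Th2}.
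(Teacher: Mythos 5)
This theorem is stated in the paper as a cited result from \cite[Theorem 3]{fawzi2016sparse}; the paper itself supplies no proof, so the relevant comparison is with the argument in that reference. Your opening reduction is exactly right: by the FSOS-from-chordal-cover machinery (\cite[Theorem~1]{fawzi2016sparse}) it suffices to produce one chordal cover of the circulant Cayley graph $\operatorname{Cay}(\widehat{\mathbb{Z}}_N, S_d)$ with a Fourier support of size at most $3d\log_2(N/d)$, and the source does indeed proceed by a dyadic/hierarchical scheme over roughly $\log_2(N/d)$ levels. So your skeleton points in the right direction.

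The proposal nevertheless has a genuine gap, and you flag it yourself: the anchor sets $A_k$, which carry the entire technical content of the theorem, are never constructed. You state the three conditions they must satisfy (chordalizing each level without spoiling coarser ones, covering every maximal clique up to translation, and staying within a $3d$-per-level budget), but you do not exhibit any candidate and do not explain why such sets exist or why $3d$ suffices. A proof cannot stop at listing the specifications for the hard part; absent the explicit dyadic clique structure and its verification, nothing has been shown. Two secondary issues: (a) reducing to $M = 2^m$ by taking $m=\lceil \log_2 M\rceil$ gives the bound $3d\lceil\log_2(N/d)\rceil$, which overshoots the claimed $3d\log_2(N/d)$ by up to nearly $3d$ when $N/d$ is not a power of $2$, so the ``round at the end'' step needs repair (the divisibility hypothesis $d\mid N$ is used for more than bookkeeping); and (b) both the chordality claim (via a perfect elimination ordering ``peeling from the finest layer outward'') and the assertion that $T=\bigcup_k A_k$ is a Fourier support in the sense of Section~\ref{subsec:graph} are asserted rather than checked. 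As written, this is a plausible outline of the strategy in \cite{fawzi2016sparse} rather than a proof of the theorem.
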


\begin{theorem}\cite[Theorem 3.2]{sakaue2017exact}\label{theorem32}
Every degree $r$  nonnegative polynomial on $\mathbb{Z}_2^n$ has an FSOS certificate of degree $\lceil (n+r-1)/2 \rceil$.
\end{theorem}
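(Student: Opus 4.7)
The statement is Theorem~3.2 of Sakaue--Takeda--Kim--Ito, cited verbatim. To re-derive it, my plan is to exploit the duality between sum-of-squares decompositions and truncated moment problems on the Boolean hypercube. Set $s := \lceil (n+r-1)/2 \rceil$, and let $\mathcal{P}_d$ denote the space of multilinear polynomials of degree $\le d$ on $\{-1,1\}^n$, spanned by the characters $\chi_S = \prod_{i \in S} x_i$ with $|S| \le d$. The goal is to show that every nonnegative $f \in \mathcal{P}_r$ admits a representation $f = \sum_j q_j^2$ with each $q_j \in \mathcal{P}_s$.

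First I would recast the existence of the FSOS certificate as a PSD feasibility problem. Using $\chi_S \chi_T = \chi_{S \triangle T}$, the certificate exists if and only if there is a real positive semidefinite Gram matrix $G \in \mathbb{R}^{\binom{[n]}{\le s}\times\binom{[n]}{\le s}}$ satisfying
\[ \widehat f(U) = \sum_{\substack{|S|,|T| \le s \\ S \triangle T = U}} G(S,T) \qquad \text{for all } U \subseteq [n], \]
with the convention $\widehat f(U) = 0$ for $|U| > r$. By SDP duality, feasibility of this system for every nonnegative $f \in \mathcal{P}_r$ is equivalent to the dual condition: every linear functional $L \colon \mathcal{P}_{2s} \to \mathbb{R}$ whose moment matrix $M_s(L) := [L(\chi_{S \triangle T})]_{|S|,|T| \le s}$ is positive semidefinite must satisfy $L(f) \ge 0$ for every nonnegative $f \in \mathcal{P}_r$. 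Equivalently, $L|_{\mathcal{P}_r}$ must be the truncated moment functional of some nonnegative measure on $\{-1,1\}^n$.

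The core of the plan is then to construct, from any such $L$, a nonnegative measure $\mu$ on $\{-1,1\}^n$ with $L(p) = \int p\,d\mu$ for all $p \in \mathcal{P}_r$. I would form the truncated kernel $N := \{p \in \mathcal{P}_s : M_s(L)\,\widehat{p} = 0\}$ and its common vanishing locus $Z := \{a \in \{-1,1\}^n : p(a) = 0 \text{ for all } p \in N\}$. Positive semidefiniteness of $M_s(L)$ forces $L(pq) = 0$ for $p \in N$, $q \in \mathcal{P}_s$, so $L$ is ``supported'' on $Z$ in the appropriate sense. A spectral decomposition of $M_s(L)$ should then express $L|_{\mathcal{P}_r}$ as $\sum_{a \in Z} c_a \,\delta_a|_{\mathcal{P}_r}$ with nonnegative weights $c_a$, from which $L(f) = \sum_a c_a f(a) \ge 0$ follows immediately.

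The hardest step will be establishing this atomic representation at exactly the threshold $s = \lceil(n+r-1)/2\rceil$: in general, a PSD moment matrix need not correspond to any measure even on a finite set, and one must invoke a flat-extension result in the spirit of Curto--Fialkow, adapted to the Boolean hypercube. The inequality $2s \ge n+r-1$ is precisely what forces $N$ to cut out a common zero set $Z$ whose evaluation functionals are rich enough to represent $L|_{\mathcal{P}_r}$; I expect the proof to proceed by an inductive PSD-preserving extension of $L$ from $\mathcal{P}_{2s}$ up to the full $\mathcal{P}_n$, culminating in a full moment sequence of a nonnegative atomic measure whose restriction to $\mathcal{P}_r$ matches $L|_{\mathcal{P}_r}$. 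Verifying that the chosen threshold is exactly what makes this flat extension go through---no higher, no lower---is the central technical challenge of the argument.
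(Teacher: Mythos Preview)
The paper does not prove this theorem at all: it is merely quoted from \cite{sakaue2017exact} as a background result in the preliminaries section, with no accompanying argument. There is therefore no ``paper's own proof'' to compare your proposal against.

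That said, your outline is broadly the right strategy and is in the spirit of the original Sakaue--Takeda--Kim--Ito argument: reduce to the dual moment problem and show that every linear functional $L$ on $\mathcal{P}_{2s}$ with $M_s(L)\succeq 0$ restricts on $\mathcal{P}_r$ to an actual measure on $\{-1,1\}^n$. However, the proposal as written is only a plan, not a proof. The decisive step---why the threshold $2s\ge n+r-1$ is exactly what guarantees a flat (or measure-representing) extension on the Boolean cube---is identified but not executed. In particular, the passage ``a spectral decomposition of $M_s(L)$ should then express $L|_{\mathcal{P}_r}$ as $\sum_{a\in Z}c_a\,\delta_a|_{\mathcal{P}_r}$ with nonnegative weights'' is not justified: a spectral decomposition of a moment matrix does not by itself produce point evaluations, and one needs the specific combinatorics of $\{-1,1\}^n$ (e.g., that multiplication by $x_i$ maps $\mathcal{P}_s$ to $\mathcal{P}_{s+1}$ bijectively on suitable slices, or the Laurent--Lasserre finite-convergence results for the Boolean hypercube) to push the extension through at precisely this degree. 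Until that mechanism is spelled out, the argument remains a sketch rather than a proof.
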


\section{The problem of computing sparse FSOS and its convex relaxation}\label{sec:algorithms}\red{We formulate the problem of computing the sparsest FSOS as the optimization problem \eqref{Min:L0-1} and present its convex relaxation \eqref{1NormMin}. We prove in Theorem \ref{lem:square root} that the point-wise square root of the given function provides a solution to \eqref{1NormMin}, which can be computed in quasi-linear time in $|G|$.}
\subsection{\red{The FSOS sparsity minimization problem}}\label{SelectGramSection}
We recall that Proposition~\ref{prop:priori-Gram-matrix} gives a Gram matrix in form of
\begin{equation}\label{computeQ}
Q({\chi,\chi'})=\frac{1}{|G|}\widehat{f}(\chi^{-1}\chi'),\quad \chi,\chi'\in \widehat{G}.
\end{equation}
It is very efficient to compute $Q$ via (\ref{computeQ}),
but one usually gets a dense Gram matrix, as the next example illustrates.

\begin{example}\label{example:Z6}
The following function is considered in \cite[Example 4]{fawzi2016sparse}:
\[
f: \mathbb{Z}_6 \to \mathbb{C},\quad
f(x)=1-\frac{1}{2}(\chi(x)+\chi^{-1}(x)),\]
where $\chi(x)=\exp^{\frac{2i \pi x}{6}}$. The Gram matrix of $f$ found in \cite{fawzi2016sparse} is:
\[
  \arraycolsep=2.5pt \frac{1}{6}\cdot
\kbordermatrix{
&\chi^0  & \chi^1 & \chi^2 & \chi^3 & \chi^4 & \chi^5\\
\chi^0 &1&-1/2&0&0&0&-1/2&\\
\chi^1 &-1/2&1&-1/2&0&0&0&\\
\chi^2 &0&-1/2&1&-1/2&0&0&\\
\chi^3 &0&0&-1/2&1&-1/2&0&\\
\chi^4 &0&0&0&-1/2&1&-1/2&\\
\chi^5 &-1/2&0&0&0&-1/2&1& }.\]
By this Gram matrix and techniques from graph theory, one can only get an FSOS with sparsity at least four. However, it turns out that $f$ has another sparse Gram matrix:
\[
  \arraycolsep=2.5pt\frac{1}{6}\cdot
\kbordermatrix{
&\chi^0  & \chi^1 & \chi^2 & \chi^3 & \chi^4 & \chi^5\\
\chi^0 &3&-3&0&0&0&0&\\
\chi^1 &-3&3&0&0&0&0&\\
\chi^2 &0&0&0&0&0&0&\\
\chi^3 &0&0&0&0&0&0&\\
\chi^4 &0&0&0&0&0&0&\\
\chi^5 &0&0&0&0&0&0& },\]
from which one can obtain an FSOS of $f$ with sparsity two: $f(x)=\frac{1}{2} \left| 1-\chi(x) \right|^2$.
\end{example}

\red{By Proposition~\ref{prop:Gram}, there is a 1-1 correspondence between (sparse) FSOS of $f$ and (sparse) Gram matrices. Therefore, the problem of computing sparse FSOS can be formulated as the following optimization problem:}
\begin{equation}\label{Min:L0GramMatrix}
 \min_{Q \text{ is a Gram matrix}}  \# \left( \{\chi: \exists \chi',Q(\chi,\chi')\neq 0 \}\cup\{\chi': \exists \chi,Q(\chi,\chi')\neq 0 \} \right).
\end{equation}
Here, we label columns and rows of $Q$ by characters of the group and $Q(\chi,\chi')$ denotes the element of $Q$ labelled by $\chi$ and $\chi'$. \red{We denote by $\# S$ the cardinality of a set $S$}.

\begin{lemma}
Let  $\operatorname{diag}(Q)$ be the  vector consisting of diagonal elements of $Q$. Problem (\ref{Min:L0GramMatrix}) is equivalent to the problem of  minimizing the $\ell^0$-norm of $\operatorname{diag}(Q)$, i.e.
  \begin{equation}\label{Min:L0Lemma}
 \min_{Q \text{: Gram matrix}} \|\operatorname{diag}(Q)\|_0.
\end{equation}
\end{lemma}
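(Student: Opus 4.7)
The plan is to exploit the fact that $Q$ is a Gram matrix, hence positive semidefinite, and derive the equivalence from the standard property that the zero pattern of the diagonal of a PSD matrix controls the zero pattern of the entire matrix.

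More concretely, I would first show that for any PSD matrix $Q$, the set
\[
A(Q) \coloneqq \{\chi: \exists \chi',\, Q(\chi,\chi')\neq 0 \}\cup\{\chi': \exists \chi,\, Q(\chi,\chi')\neq 0 \}
\]
coincides with the support $B(Q) \coloneqq \{\chi: Q(\chi,\chi)\ne 0\}$ of the diagonal. The inclusion $B(Q)\subseteq A(Q)$ is immediate since any nonzero diagonal entry contributes to $A(Q)$. For the reverse inclusion, I would invoke the PSD inequality
\[
|Q(\chi,\chi')|^2 \le Q(\chi,\chi)\, Q(\chi',\chi'),
\]
which is a consequence of the $2\times 2$ principal minor being nonnegative (and of $Q$ being Hermitian). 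If $\chi \in A(Q)$ through some $\chi'$ with $Q(\chi,\chi')\ne 0$, then the inequality forces $Q(\chi,\chi)\ne 0$, so $\chi\in B(Q)$; the symmetric argument handles the other case. This gives $A(Q)=B(Q)$ and therefore $\#A(Q) = \|\operatorname{diag}(Q)\|_0$.

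With this identity at hand, the equivalence of the two optimization problems is immediate: any Gram matrix $Q$ feasible for (\ref{Min:L0GramMatrix}) is also feasible for (\ref{Min:L0Lemma}) with the same objective value, and vice versa, since the feasible set (Gram matrices of $f$) is the same and the objectives agree pointwise on this set. In particular, minimizers coincide.

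The only mildly delicate point will be making sure that the PSD inequality $|Q(\chi,\chi')|^2\le Q(\chi,\chi)Q(\chi',\chi')$ is applied correctly to complex Hermitian PSD matrices (the Gram matrices here may have complex entries since the characters are complex-valued); but this is standard. No other obstacles are anticipated, so the whole argument should fit in a few lines once the PSD observation is stated.
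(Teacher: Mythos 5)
Your argument is essentially identical to the paper's: both reduce the count in \eqref{Min:L0GramMatrix} to the diagonal support by observing that $Q$ is Hermitian (so row-support and column-support sets coincide) and positive semidefinite (so a nonzero off-diagonal entry forces the corresponding diagonal entries to be positive). The only cosmetic difference is that you make the second step explicit via the $2\times 2$ principal-minor inequality $|Q(\chi,\chi')|^2\le Q(\chi,\chi)Q(\chi',\chi')$, whereas the paper states the implication without spelling out that justification.
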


\begin{proof}
As  $Q=Q^*$, we have  $Q(\chi,\chi') \neq 0$ if and only if  $Q(\chi',\chi)\neq 0$. This implies
\[\# \{\chi': \exists \chi,Q(\chi,\chi')\neq 0 \}=\# \{\chi: \exists \chi',Q(\chi,\chi')\neq 0 \}.\]
Moreover,  $Q$ is positive semidefinite, hence for $\chi,\chi' \in \widehat{G}$, $Q(\chi,\chi')\neq 0$ implies $Q(\chi,\chi)> 0$ and $Q(\chi',\chi')> 0$. Therefore, we have
 \[\#\{\chi: \exists \chi',Q(\chi',\chi)\neq 0 \}=\#\{\chi: Q(\chi,\chi)\neq 0 \},\]
which implies the equivalence between \eqref{Min:L0GramMatrix} and \eqref{Min:L0Lemma}.
\end{proof}
\subsection{\red{A convex relaxation}}Although the problem of minimizing the
$\ell^0$-norm is notoriously difficult, one can relax the problem by replacing the $\ell^0$-norm by the $\ell^1$-norm, which is a widely used method in practice \cite{candes2006stable,1564423}. Our method of solving \eqref{Min:L0Lemma} is motivated by this popular method. However, simply replacing $\|\operatorname{diag}(Q)\|_0$ by $\|\operatorname{diag}(Q)\|_1$ in \eqref{Min:L0Lemma} makes no sense in our situation. Let $f=\sum_{\chi \in \widehat{G}} \widehat{f}(\chi)\chi$ be a nonnegative function on a finite abelian group $G$, let $Q$ be a Gram matrix of $f$. Then, for any $\chi' \in \widehat{G}$, we have
\[
\sum_{\chi \in \widehat{G}}Q(\chi,\chi\chi') =  \widehat{f}(\chi').
\]
In particular, we have
\[
\sum_{\chi \in \widehat{G}}Q(\chi,\chi) = \sum_{\chi \in \widehat{G}}Q(\chi,\chi \chi_0) =  \widehat{f}(\chi_0).
\]
Here \red{$\chi_0$ is the identity element in $\widehat{G}$, i.e.,  $\chi_0(x)=1$ for all $x \in G$.} As $Q$ is positive semidefinite, we have
\begin{equation}\label{1norm}
\|\operatorname{diag}(Q)\|_1 =  \sum_{\chi \in \widehat{G}}Q(\chi,\chi) =   \widehat{f}(\chi_0).
\end{equation}
The above calculation indicates the following
\begin{lemma}\label{lem:constant l1-norm}
Given a finite abelian group $G$ and a nonnegative function $f$ on $G$, $\|\operatorname{diag}(Q)\|_1$ is a constant value for any Gram matrix of $f$.
\end{lemma}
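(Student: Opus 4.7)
The plan is to derive the claim directly from the identity that already appears in the paragraph preceding the lemma statement, namely that for any Gram matrix $Q$ of $f = \sum_{\chi \in \widehat{G}} a_\chi \chi$ and any $\chi' \in \widehat{G}$ one has $\sum_{\chi \in \widehat{G}} Q(\chi, \chi \chi') = a_{\chi'}$. First I would recall what a Gram matrix means in this setting: writing an FSOS decomposition $f = \sum_i |g_i|^2$ with $g_i = \sum_\chi H_i(\chi)\, \chi$ and $Q = H^\ast H$, the character orthogonality of $\widehat{G}$ together with the factorization gives $f(x) = \sum_{\chi,\chi'} Q(\chi,\chi')\, \chi^{-1}(x)\chi'(x)$, and collecting terms with $\chi^{-1}\chi' = \tau$ yields $a_\tau = \sum_{\chi} Q(\chi, \chi \tau)$.

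Next I would specialize this identity to the trivial character $\tau = \chi_0$, which is the identity in $\widehat{G}$. This gives
\begin{equation*}
\operatorname{tr}(Q) \;=\; \sum_{\chi \in \widehat{G}} Q(\chi,\chi) \;=\; a_{\chi_0}.
\end{equation*}
Since $Q$ is positive semidefinite, each diagonal entry satisfies $Q(\chi,\chi) \ge 0$, so the $\ell^1$-norm of the diagonal coincides with its sum:
\begin{equation*}
\|\operatorname{diag}(Q)\|_1 \;=\; \sum_{\chi \in \widehat{G}} Q(\chi,\chi) \;=\; a_{\chi_0}.
\end{equation*}
The right-hand side is the value of the Fourier coefficient of $f$ at the trivial character, which is determined by $f$ alone and is independent of the particular Gram matrix chosen to represent $f$. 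This yields the claim.

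There is essentially no obstacle: the only point requiring care is making sure the convention for the Gram matrix (what is labelled by rows and columns, and how $\chi^{-1}\chi'$ gives the Fourier index) matches the one used elsewhere in the paper, so that the sum-versus-trace identification is exact. Once that bookkeeping is fixed, the argument is a two-line consequence of positive semidefiniteness plus the Fourier identity.
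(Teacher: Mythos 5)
Your proof is correct and follows essentially the same route as the paper: specialize the Fourier-coefficient identity $\sum_{\chi} Q(\chi,\chi\chi') = a_{\chi'}$ to the trivial character to obtain $\operatorname{tr}(Q) = a_{\chi_0}$, and use positive semidefiniteness to identify the trace with the $\ell^1$-norm of the diagonal. This is exactly the calculation the paper carries out in the paragraph preceding the lemma.
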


As a consequence of Lemma~\ref{lem:constant l1-norm}, if we replace $\|\operatorname{diag}(Q)\|_0$ by $\|\operatorname{diag}(Q)\|_1$ directly in \eqref{Min:L0Lemma}, then any Gram matrix of $f$ serves as an optimizer of the relaxed problem. Therefore, a more delicate consideration is necessary to alleviate \eqref{Min:L0Lemma}. To that end, we observe that if $Q$ is a Gram matrix of $f$, then for any fixed character $\chi \in \widehat{G}$, the matrix $Q_1$ defined by
\[
Q_1(\chi',\chi'') \coloneqq Q(\chi\chi',\chi\chi''),\quad \chi',\chi''\in \widehat{G},
\]
is also a Gram matrix of $f$. Hence it is sufficient to find a sparse Gram matrix $Q$ such that $Q(\chi_0,\chi_0)\neq 0$ and the minimum of $\|\operatorname{diag}(Q)\|_0$ in \eqref{Min:L0Lemma} remains unchanged if we impose this extra condition. Thus we obtain
\begin{equation}\label{Min:L0-1}
 \min_{\substack{  Q:~\text{Gram matrix}   }} \|\operatorname{diag}(Q)\|_0=\min_{\substack{  Q:~\text{Gram matrix}  }} \#\{\chi\neq \chi_0:Q(\chi,\chi)\neq 0\}+1,
\end{equation}
which is equivalent to \eqref{Min:L0Lemma} in the sense that they have the same optimal value. The convex relaxation problem for the right-hand side optimization problem in \eqref{Min:L0-1} is
\[
\min_{  Q:~\text{Gram matrix}  } \sum_{\chi\neq \chi_0} |Q(\chi,\chi)|,
\]
which can be formulated as the following  semidefinite programming problem:
\begin{eqnarray}\label{1NormMin}
\begin{aligned}
&\min_{Q \in \mathbb{C}^{\widehat{G} \times \widehat{G}}}   &&\langle Q, A\rangle \\
&~\text{s.t.}   &&  \langle Q, B_{\chi}\rangle=\widehat{f}(\chi), \quad \chi \in \widehat{G},\\
&              && Q \succeq 0,
\end{aligned}
\end{eqnarray}
where
\[
A(\chi,\chi') = \begin{cases}
1,~\text{if}~\chi = \chi' \ne \chi_0 \\
0,~\text{otherwise}.
\end{cases},\quad
B_{\chi}(\chi',\chi'') = \begin{cases}
1,~\text{if}~ \chi'^{-1}\chi''=\chi\\
0,~\text{otherwise}.
\end{cases}
\]
It is clear that
 \[\langle Q, A\rangle=\sum_{\chi \neq \chi_0}Q(\chi,\chi). \]
Since $Q \succeq 0$, we have $Q(\chi,\chi)\geq 0$ for each $\chi \in \widehat{G}$ and this implies
\[\sum_{\chi \neq \chi_0}Q(\chi,\chi)=\sum_{\chi \neq \chi_0}|Q(\chi,\chi)|.\]
The conditions $ \langle Q, B_{\chi}\rangle=\widehat{f}(\chi)$ is equivalent to  \eqref{eq:Gram:conditions}, which ensures that $Q$ is indeed a Gram matrix of $f$.
Now we arrive at the following result about the relation between \eqref{1NormMin} and  \eqref{Min:L0-1}.
\begin{proposition}\label{prop:convex relax}
The minimization problem~\eqref{1NormMin} is a convex relaxation of \eqref{Min:L0-1}. Moreover, \eqref{1NormMin} is equivalent to
\begin{eqnarray}\label{1NormMax}
\begin{aligned}
&\max_{Q \in \mathbb{C}^{\widehat{G} \times \widehat{G}}}   && Q(\chi_0,\chi_0) \\
&~\text{s.t.}   &&  \langle Q, B_{\chi}\rangle=\widehat{f}(\chi), \quad \forall \chi \in \widehat{G},\\
&              && Q \succeq 0,
\end{aligned}
\end{eqnarray}
Here $\chi_0$ is the identity element in $\widehat{G}$.
\end{proposition}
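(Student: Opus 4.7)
The plan is to prove the two assertions separately: first that \eqref{1NormMin} is a convex relaxation of \eqref{Min:L0-1}, and then that \eqref{1NormMin} and \eqref{1NormMax} are equivalent.

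For the relaxation, I would first identify the feasible region of \eqref{1NormMin}. The constraints $\langle Q,B_\chi\rangle = a_\chi$ for all $\chi\in\widehat G$ exactly encode the conditions on the Fourier coefficients that force $\sum_{\chi',\chi''}Q(\chi',\chi'')\overline{\chi'}\chi'' = f$, so together with $Q\succeq 0$ they characterize the set of Gram matrices of $f$. This set is the intersection of an affine subspace with the PSD cone, hence convex. The objective $\langle Q,A\rangle = \sum_{\chi\ne\chi_0}Q(\chi,\chi)$ is linear, and on the PSD cone coincides with $\|(Q(\chi,\chi))_{\chi\ne\chi_0}\|_1$ because the diagonal entries of a PSD matrix are nonnegative. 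Dropping the non-convex condition $Q(\chi_0,\chi_0)\ne 0$ and the additive constant $+1$ from the objective of \eqref{Min:L0-1}, one recognizes \eqref{1NormMin} as the standard $\ell^1$-surrogate of that $\ell^0$-minimization, which yields the convex relaxation claim.

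For the equivalence between \eqref{1NormMin} and \eqref{1NormMax}, the feasible sets coincide, so only the objectives need comparison. By Lemma~\ref{lem:constant l1-norm}, for every $Q$ in the feasible set the quantity $\|\operatorname{diag}(Q)\|_1 = \operatorname{tr}(Q)$ (equality because $Q\succeq 0$) equals a fixed constant $C$; this is a consequence of the single linear constraint indexed by the trivial character $\chi_0$, since $B_{\chi_0}$ is a scalar multiple of the identity. Therefore, on the common feasible set,
\[
\langle Q,A\rangle \;=\; \sum_{\chi\ne\chi_0}Q(\chi,\chi) \;=\; \operatorname{tr}(Q) - Q(\chi_0,\chi_0) \;=\; C - Q(\chi_0,\chi_0),
\]
so minimizing $\langle Q,A\rangle$ is the same as maximizing $Q(\chi_0,\chi_0)$, which is exactly \eqref{1NormMax}. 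Hence the two programs share the same set of optimizers and their optimal values differ by the constant $C$.

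The argument is purely linear-algebraic, with no analytical difficulty. The one conceptual point to handle carefully is the meaning of \emph{relaxation}: the feasible set of \eqref{1NormMin} contains that of \eqref{Min:L0-1} after the non-convex condition $Q(\chi_0,\chi_0)\ne 0$ is dropped, and the $\ell^1$-objective is the standard convex surrogate for the $\ell^0$-count used in compressed sensing rather than a pointwise lower bound. Once this is acknowledged, both parts of the proposition are immediate from the identification of feasible points with Gram matrices and from Lemma~\ref{lem:constant l1-norm}.
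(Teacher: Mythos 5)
Your proof is correct and follows essentially the same route as the paper: the paper's (very terse) proof also cites the constancy of $\sum_{\chi}Q(\chi,\chi)$ on the feasible set (Lemma~\ref{lem:constant l1-norm}) to conclude the equivalence with \eqref{1NormMax}, and treats the relaxation claim as a routine identification of the feasible set with Gram matrices and the linear objective with the $\ell^1$-surrogate. You have simply spelled out the same computation in fuller detail.
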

\begin{proof}
The first part of the proposition can be verified easily by a direct computation and the second claim follows immediately by recalling the fact that $\sum_{\chi \in \widehat{G}}Q(\chi,\chi)$ is equal to the constant $\widehat{f}(\chi_0)$.
\end{proof}

\subsection{\red{A closed form solution to the convex relaxation}}\label{subsec:closed-form-solution}
By the simple fact that a nonnegative function on a finite abelian group has a square root, we are able to prove that solving \eqref{1NormMin} is equivalent to computing the square root of $f$.
\begin{theorem}\label{lem:square root}
Let $f$ be a nonnegative, nonzero function on a finite abelian group $G$ and let $h$ be its square root defined by
\[
h(x)=\sqrt{f(x)},\quad x \in G.
\]
Suppose that $h =\sum_{\chi \in \widehat{G}} a_{\chi}\chi$ is the Fourier expansion of $h$ and $Q$ is the matrix defined by
\begin{equation}\label{Q0matrix}
Q_0(\chi,\chi') = \overline{a_{\chi}} a_{\chi'}.
\end{equation}
Then $Q_0$ is a  solution of   (\ref{1NormMin}).
   \end{theorem}

   \begin{proof}
First we prove  that $Q_0$ is a feasible solution of \eqref{1NormMin}. 
By definition, we observe that $Q_0\succeq 0$ and $\rank(Q_0) = 1$. Since for each $x \in G$,
    \[f(x)=\left(\sum_{\chi \in \widehat{G}}a_{\chi}\chi(x)\right)\overline{\left(\sum_{\chi \in \widehat{G}}a_{\chi}\chi(x)\right)}= \sum_{\chi,\chi' \in \widehat{G}} Q_0(\chi,\chi') \chi^{-1}(x)\chi'(x),\]
$Q_0$ is a Gram matrix of $f$ of rank $1$.

Next, we prove that $Q_0$ is an optimal solution of \eqref{1NormMin}. To achieve this, we recall from Proposition~\ref{prop:convex relax} that \eqref{1NormMin} is equivalent to \eqref{1NormMax}. Thus it suffices to prove that $Q_0$  an optimal solution of \eqref{1NormMax}. 

\noindent
{\it Claim:
we claim that if $\widetilde{Q}$ is a feasible solution of \eqref{1NormMax} of rank bigger than one, then \[\widetilde{Q}(\chi_0,\chi_0) \leq {Q_0}(\chi_0,\chi_0).\] }


We assume for a moment that the claim holds true. The proof is completed by showing that $\widehat{Q}(\chi_0,\chi_0) \leq {Q_0}(\chi_0,\chi_0)$ for any rank one Gram matrix $\widehat{Q}$. To see this, we write $\widehat{Q} = u^*u$ and denote by $g$ the function corresponding to $u$. Then we have
\[
\widehat{Q}(\chi_0,\chi_0)=\frac{1}{|G|^2}\left|\sum_{x \in {G}}g(x)\right|^2 \leq\frac{1}{|G|^2} \left(\sum_{x \in {G}}|g(x)|\right)^2=\frac{1}{|G|^2}\left(\sum_{x \in {G}}h(x)\right)^2={Q_0}(\chi_0,\chi_0).
\]
Here the penultimate equality follows from $|g|^2 = f = h^2$.

Lastly it is left to prove the claim. We suppose that $\rank(\widetilde{Q}) = r \ge 2$. We recall that a spectral decomposition
    \[\widetilde{Q} =\sum_{i=1}^r u_i^*u_i\]
leads to an SOS decomposition $f=\sum_{i=1}^r \left| g_i \right|^2$, where $g_i$ is the function corresponding to $u_i$ defined by
\[
g_i=\sum_{\chi \in \widehat{G}}u_i(\chi)\chi,\quad i =1,\dots, r.
\]
In particular, we have
\begin{align*}
u_i(\chi_0) &=\langle g_i,\chi_0 \rangle=\frac{1}{|G|}\sum_{x \in {G}}g_i(x), \\
\widetilde{Q}(\chi_0,\chi_0) &=\sum_{i=1}^r |u_i(\chi_0)|^2= \frac{1}{|G|^2}  \sum_{i=1}^r \left|\sum_{x \in {G}}g_i(x)\right|^2.
\end{align*}
By the relation $h^2 = f = \sum_{i=1}^r |g_i|^2$ and Cauchy-Schwartz inequality, we obtain
\begin{align*}
\sum_{i=1}^r \left| \sum_{x \in {G}}g_i(x)\right|^2 &= \sum_{i=1}^r \sum_{x\in G} |g_i(x)|^2 +  \sum_{i =1}^r  \sum_{x \ne y} g_i(x) \overline{g_i}(y) \\
&= \left( \sum_{x\in G} h(x)^2 \right) +   \sum_{x \ne y} \sum_{i =1}^r  g_i(x) \overline{g_i}(y) \\
&\le \left( \sum_{x\in G} h(x)^2 \right) +  \sum_{x \ne y}  \sqrt{\left( \sum_{i=1}^r |g_i(x)|^2 \right) \left( \sum_{i=1}^r |g_i(y)|^2 \right)} \\
&= \left( \sum_{x\in G} h(x)^2 \right) + \sum_{x\ne y} h(x) h(y) \\
& = \left| \sum_{x\in G} h(x) \right|^2.
\end{align*}
This implies that $\widetilde{Q}(\chi_0,\chi_0) \le Q_0(\chi_0,\chi_0)$ and completes the proof of the claim.
\end{proof}

An important consequence of Theorem~\ref{lem:square root}  is that the optimization problem \eqref{1NormMin} can be  solved in $O\left(|G|\log(|G|)\right)$ time via the fast Fourier transform (FFT) and the inverse fast Fourier transform (iFFT). As an illustrative example, for xqfunction $f(x)= 1-\cos(\frac{2\pi x }{6})$ discussed in Example~\ref{example:Z6}. By the Fourier transform, we have
\begin{eqnarray*}
\nonumber  \sqrt{f}= &\left(\frac{\sqrt{2}}{3} + \frac{\sqrt{6}}{6}\right)-\left(\frac{\sqrt{2}}{12}+ \frac{\sqrt{6}}{12}\right)\left(\chi+\chi^{-1}\right)+\left(\frac{\sqrt{2}}{12} - \frac{\sqrt{6}}{12}\right)\left(\chi^2+\chi^{-2}\right)+\left(\frac{\sqrt{6}}{6} - \frac{\sqrt{2}}{3}\right)\chi^3.
\end{eqnarray*}
According to Lemma~\ref{lem:square root}, the rank one matrix $Q_0 = u^\ast u$ is a solution to \eqref{1NormMin}, where
\[
u =\begin{bmatrix}
\frac{\sqrt{2}}{3} + \frac{\sqrt{6}}{6} &
-\frac{\sqrt{2}}{12}- \frac{\sqrt{6}}{12} &
\frac{\sqrt{2}}{12} - \frac{\sqrt{6}}{12} &
\frac{\sqrt{6}}{6} - \frac{\sqrt{2}}{3} &
\frac{\sqrt{2}}{12} - \frac{\sqrt{6}}{12} &
-\frac{\sqrt{2}}{12}- \frac{\sqrt{6}}{12}
\end{bmatrix}.
\]
However, it is obvious that the matrix $Q_0$ is not sparse and actually this is also the case in general.
\subsection{\red{Square-root-based basis selection method for sparse FSOS}}\label{subsec:basis selection} \red{Although Theorem~\ref{lem:square root} already provides a solution $Q_0$ of \eqref{1NormMin}, it is not sparse.} To obtain a sparse solution of \eqref{1NormMin}, we use the magnitude of the diagonal elements of $Q_0$ as a heuristic guidance: \emph{if the term  $Q_0(\chi,\chi)$ is small, then we search for a  Gram matrix $\widetilde{Q}$ such that $\widetilde{Q}(\chi,\chi) = 0$}. Our heuristic guidance is inspired by the idea of truncation, which is widely used in scientific computing~\cite{EY1936,Elliott1965,BS2011} and numerical analysis~\cite{Atkinson2008,SB2013}. \red{However, the sparse matrix $\widetilde{Q}$ we search for is an exact Gram matrix of $f$ without error, it is indeed a sparse Gram matrix. 
  We remark that although there is no guarantee on the existence of $\widetilde{Q}$, Theorem \ref{thm:square-root_based_error} supplies a rationale for this heuristic guidance. Moreover, our numerical experiments in Section~\ref{subsec:numerics} illustrate both the effectiveness and efficiency of the heuristic guidance.
  }

Suppose $Q$ is a solution of \eqref{1NormMin}. We observe that there exists a permutation $\sigma\in \mathfrak{S}_{|G|}$ such that
\begin{equation}\label{eq:ordering}
Q(\chi_{\sigma(1)},\chi_{\sigma(1)}) \geq Q(\chi_{\sigma(2)},\chi_{\sigma(2)}) \geq Q(\chi_{\sigma(3)},\chi_{\sigma(3)}) \geq \cdots \geq Q(\chi_{\sigma(|G|)},\chi_{\sigma(|G|)}).
\end{equation}
Without loss of generality, we may simply assume $\chi_k=\chi_{\sigma(k)}$ in the sequel.

For each $|G| \ge k\ge 1$, we define
 \begin{equation}\label{SatasifyMin}
 \begin{split}
   \mathcal{S}_k =\{ Q \in \mathbb{C}^{\widehat{G} \times \widehat{G}}:~& Q \text{ is Gram matrix of } f, \text{ with } Q(\chi_i,\chi_i)=0  \text{ for all } k \le i \leq |G|\}.
\end{split}
\end{equation}
By definition, each $Q \in \mathcal{S}_k$ is a Gram matrix of sparsity at most $(k-1)$. 
 According to the heuristic guidance, we choose $k$ such that $\mathcal{S}_k$ is nonempty via binary search. %
\red{The problem of checking the non-emptiness of the convex set $\mathcal{S}_k$ can be solved via an SDP solver in polynomial time in $k$.

We end this subsection by the following proposition on the invariance of the square-root-based basis selection method under a group isomorphism.

 \begin{proposition}\label{prop:invariant-isomorphism}
Let $G_1,G_2$ be isomorphic finite abelian groups and let $\phi:G_2\to G_1$ be an isomorphism. For any nonnegative function $f$ on $G_1$ and $\chi \in \widehat{G_1}$, we have 
\[\chi \circ \phi \in \widehat{G_2}, ~~ \widehat{\sqrt{f}}(\chi)=\widehat{\sqrt{f\circ \phi}}(\chi \circ \phi).\]
Moreover, $f$ and $f\circ \phi$  share the same  matrix $Q_0$ given in (\ref{Q0matrix}). 
\end{proposition}}
\begin{proof}
It is clear that $\chi \circ \phi \in \widehat{G_2}$, and
\[\widehat{\sqrt{f\circ \phi}}(\chi \circ \phi)=\frac{1}{|G|}\sum_{x \in G_2}\sqrt{f\circ \phi(x)} \cdot \overline{\chi \circ \phi(x)}=\frac{1}{|G|}\sum_{x \in G_1}\sqrt{f(x)}\cdot \overline{\chi(x) }=\widehat{\sqrt{f}}(\chi).\]
The moreover part follows from the fact that $Q_0$ is determined by the square root of the function.
\end{proof}

As a comparison, we consider the following example. \if For any finite abelian group $G$, the  group algebra $\mathbb{C}[\widehat{G}]$ is just the set consisting of all of the complex functions on $G$.\fi For any positive integer $n$, the group isomorphism   $\mathbb{Z}_{2\cdot 3^n}\simeq \mathbb{Z}_2 \times\mathbb{Z}_{3^n}$ leads to the following isomorphism of rings:
 \[\mathbb{C}[x,y]/\langle 1-x^{3^n},1-y^2 \rangle \cong  \mathbb{C}[\widehat{\mathbb{Z}_2 \times \mathbb{Z}_{3^n} } ] \cong \mathbb{C}[\widehat{\mathbb{Z}_{2\cdot 3^n}}]\cong \mathbb{C}[x]/\langle 1-x^{2\cdot 3^n} \rangle.   \]
 This isomorphism indicates that from the perspective of polynomial optimization,  checking the nonnegativity of  function 
 \[
 f(x)=1-\frac{1}{2}(\chi(x)+\chi^{-1}(x)), ~x \in \mathbb{Z}_{2\cdot 3^n},
 \] 
 where $\chi(x)=\exp^{\frac{x}{3^n} \pi i}$  is equivalent to checking the nonnegativity of the optimal value of either one of the following two polynomial optimization problems:
 \begin{eqnarray*}
 \nonumber
\begin{aligned}
&\min_{x \in \mathbb{C}}   && 1-\frac{1}{2}x-\frac{1}{2}x^{2\cdot 3^n-1}\\
&~\text{s.t.}  && x^{2\cdot 3^n}=1
\end{aligned}
\end{eqnarray*}
 and
  \begin{eqnarray*}
 \nonumber
\begin{aligned}
&\min_{x,y \in \mathbb{C}}   && 1-\frac{1}{2}xy^{\frac{3^n+1}{2}}-\frac{1}{2}xy^{\frac{3^n-1}{2}} \\
&~\text{s.t.}  && x^2=y^{3^n}=1.
\end{aligned}
\end{eqnarray*}
Although these two problems are essentially same,  the bases obtained by degree-based selection method are different. In fact, for any $n$, the objective function of the former problem always admits a degree-one FSOS 
\[1-\frac{1}{2}x-\frac{1}{2}x^{2\cdot 3^n-1}=\frac{1}{2}\left|1-x \right|^2=\frac{1}{2}\left(1-x\right)^*\left(1-x\right),\]
where $x^*=x^{-1}=x^{2\cdot 3^n-1}$, whereas the objective function of the latter problem has no degree-one FSOS if $n>1$.

\section{An algorithm for sparse FSOS and numerical experiments}\label{subsec:numerics}
In this section, we present Algorithm \ref{alg:4} to compute a sparse FSOS of a given nonnegative function on a finite abelian group,  and then we test Algorithm~\ref{alg:4} by different classes of numerical experiments. These experiments are done in Matlab R2016b with CVX package \cite{cvx,gb08} and SDPT3 solver \cite{SDPT3}  on a desktop computer with Intel Core i9-10900X CPU (3.7GHz). Codes for these examples can be found in \url{https://github.com/jty-AMSS/FSOS}. Our numerical examples indicate the following four features of Algorithm~\ref{alg:4}:
\begin{enumerate}[(i)]
\item For nonnegative functions with a given degree bound, the cardinality of sparse FSOS is much smaller than the theoretical upper bound for FSOS sparsity given in Theorem~\ref{thm:Parrilo}.
\item Algorithm~\ref{alg:4} can deal with nonnegative functions whose minima are close to zero.
\item Algorithm~\ref{alg:4} has quasi-linear complexity in the cardinality of the group. In particular, it can deal with nonnegative functions with sparse FSOS on groups of order up to $10^7$.
\item Algorithm~\ref{alg:4} works for arbitrary finite abelian groups. Due to the page limit, we only present examples for $\mathbb{Z}_N$ and $\mathbb{Z}_N \times \mathbb{Z}_N$, but interested readers can find codes and examples for $\mathbb{Z}_2^N$ on \url{https://github.com/jty-AMSS/FSOS}.
\end{enumerate}

With all the preparations above, we are ready to present Algorithm~\ref{alg:4} which computes a sparse FSOS of a nonnegative function on a finite abelian group.
\begin{algorithm}[!ht]
\caption{sparse FSOS of a nonnegative function on a finite abelian group}
\label{alg:4}
\begin{algorithmic}[1]
\renewcommand{\algorithmicrequire}{\textbf{Input}}
\Require
nonnegative function $f$, finite abelian group $G$, a small positive number $\delta$
\renewcommand{\algorithmicensure}{\textbf{Output}}
\Ensure
sparse FSOS certificate of $f$ on $G$.
\State compute the Fourier coefficient of square root of $f$, i.e. compute $\{a_{\chi}\}_{\chi \in \widehat{G}}$ such that $\sqrt{f}=\sum_{\chi \in \widehat{G}}a_{\chi}\chi$. \label{alg4:step1}
\State Sort  $\{a_{\chi}\}$ in descending order of their absolute values and get $|a_{\chi_1}| \geq |a_{\chi_2}|\geq  \cdots \geq |a_{\chi_{|G|}}|$.\label{alg4:step2}
\State  set $k\coloneqq  \lceil \sqrt{s} \rceil $, where $s$ is the sparsity of $f$.\label{alg4:step3}
\State  Test whether $S_k$ in \eqref{SatasifyMin} is empty for $f+\delta$. If it is empty, then let $k\coloneqq 2k$ and recalculate $S_k$ until it is not empty.\label{alg4:step4}
\State By the bisection method to find  a minimum $ k_{\min} \in [1/2  k, k] $  such that $S_{k_{\min}}$ is not empty, select  $Q \in S_{k_{\min}}$.\label{alg4:step5}
\State Compute Cholesky factorization $Q=H^{*}H$, with the columns of $H$ indexed by $\{\chi_j\}_{j=1}^{k_{\min}}$.\label{alg4:step6}
\\
\Return $f=\sum_{i=1}^r |\sum_{j=1}^{k_{\min}} H(i,\chi_j)\chi_j|^2$. \label{alg4:step12}
\end{algorithmic}
\end{algorithm}
Below we supply some details of this algorithm. 

In step~\ref{alg4:step1}, we  first perform an inverse Fast Fourier transform (iFFT) on $f$, then compute the pointwise square root, and finally apply the Fast Fourier transform (FFT) to obtain the Fourier coefficients of $\sqrt{f}$, which can be done in $\operatorname{O}(|G|\log(|G|))$ time.

In step~\ref{alg4:step2}, we sort the   absolute values of Fourier coefficients instead of the  diagonal elements of $Q_0$ since $Q_0(\chi,\chi)=|a_{\chi}|^2$ holds for all $\chi \in \widehat{G}$.  This step can  be completed in $\operatorname{O}(|G|\log(|G|))$ time.

In step~\ref{alg4:step3}, since  $f$ has no FSOS with sparsity less then $\sqrt{s}$,  we set the initial value of $k$ equal to  $\lceil \sqrt{s} \rceil$. 


In step~\ref{alg4:step4} and step~\ref{alg4:step5},  checking  whether $S_k$ is empty and the selection of $Q$ can be done by any  SDP solver. These two steps involve at most $2\log k_{\min}$  SDP computations, and each SDP problem has a size of at most $2k_{\min}$. In  Theorem~\ref{thm:square-root_based_error}, we will  show  that it is reasonable to add a   small perturbation $\delta$  to $f$ to obtain a sparse FSOS. 


  As a consequence, we obtain the following proposition about the complexity of Algorithm~\ref{alg:4}.

\begin{proposition}\label{prop:complexity}
The total complexity of  Algorithm~\ref{alg:4} is at most 
\[\operatorname{O}\left(|G| \log(|G|)+\log(k_{\min})\operatorname{SDP}(2k_{\min})\right).\]
Here $\operatorname{SDP}(k)$ denotes the complexity of solving the SDP problem of size $k \times k$. 
It is known that $\operatorname{SDP}(k)$ is at  most $\operatorname{O}(k^6)$ \cite{zhang2021sparse}.
\end{proposition}
\subsection{Experiments on $\mathbb{Z}_N$}
We present numerical results on randomly generated functions on groups of the form $\mathbb{Z}_N$. 
\subsubsection{The first experiment on $\mathbb{Z}_N$}
In the first experiment, we randomly pick $10$ functions $h_i,i=1,\dots, 10$, satisfying
\begin{enumerate}[(i)]
\item $\operatorname{deg}(h_i) \leq 24$.
\item $0<\min_{x \in G}h_i(x)<1$.
\item $\operatorname{Re}(\widehat{h_i}(\chi)),\operatorname{Im}(\widehat{h_i}(\chi)) \in [-10,10]$ for all $\chi \in \widehat{\mathbb{Z}_N}$.
\end{enumerate}
For each $1 \le i \le 10$, we apply Algorithm~\ref{alg:4} to $h_i$ separately. We record results in Table~\ref{tab:experiment1} in which the second column shows the average cardinality of the sparsity of FSOS found by Algorithm~\ref{alg:4} for $h_i$, the third column is the average time cost of Algorithm~\ref{alg:4} and the last column is the theoretical upper bound $3 d \log_2(N/d)$ for the FSOS sparsity given by Theorem~\ref{thm:Parrilo}.
\begin{table}[htbp]
\begin{center}
 \begin{tabular}{p{35pt}p{70pt}p{50pt}p{50pt}}
\toprule   group  & \if mean \fi FSOS sparsity&\if mean \fi time(s)&   bounds \cite{fawzi2016sparse}\\
\midrule
$\mathbb{Z}_{10000}$ & 16.7&   1.49& 648 \\
$\mathbb{Z}_{20000}$ &  18.6& 2.42 &720 \\
$\mathbb{Z}_{30000}$ &19 &  2.82 &  792 \\
$\mathbb{Z}_{40000}$ &17.8&  3.03  & 792 \\
$\mathbb{Z}_{50000}$ & 18.8 &3.38  & 864 \\
$\mathbb{Z}_{60000}$ & 19 &  3.89& 864 \\
\bottomrule
\end{tabular}
\end{center}
 \caption{First experiment: bounded degree and bounded minimum}\label{tab:experiment1}\label{table4.1}
\end{table}
Since the theoretical bound given by Theorem~\ref{thm:Parrilo} (last column) is for arbitrary nonnegative functions, it is much larger than our computed FSOS sparsity (second column).
\subsubsection{The second experiment on $\mathbb{Z}_N$}
In the second experiment, 
we randomly pick $10$ functions $h_i,i=1,\dots, 10$ satisfying
\begin{enumerate}[(i)]
\item $|\operatorname{supp}(h_i)|\leq 25$.
\item $0<\min_{x\in G}(h_i(x))<1$.
\item $\operatorname{Re}(\widehat{h_i}(\chi)),\operatorname{Im}(\widehat{h_i}(\chi)) \in [-1,1]$ for all $\chi\neq \chi_0$.
\end{enumerate}
For $1 \le i \le 10$, we execute Algorithm~\ref{alg:4} separately for each $h_i$ and record the sparsity and  running time, the results are summarized in Table~\ref{tab:experiment2}. Here the second column shows the average sparsity of  FSOS certificates found by Algorithm~\ref{alg:4}, the third column is the average time cost of Algorithm~\ref{alg:4}. Notice that in the first experiment, we impose a degree bound to compare the computed FSOS sparsity and its theoretical upper bound. In this experiment, we  impose a bound on the cardinality of the support of $h_i$ to test the performance of Algorithm~\ref{alg:4} on nonnegative functions whose minimum values are  close to zero.

\begin{table}[htbp]
\begin{center}
 \begin{tabular}{p{30pt}p{70pt}p{60pt}}
\toprule   group &FSOS sparsity & time(s) \\
\midrule
$\mathbb{Z}_{10000}$ & 110.4 & 224.3 \\
$\mathbb{Z}_{20000}$ & 144.4 & 1074.6\\
$\mathbb{Z}_{30000}$ & 168.2 & 2273.7\\
$\mathbb{Z}_{40000}$ & 204.8 & 3869.0\\
$\mathbb{Z}_{50000}$ & 195.8 & 3314.3\\
$\mathbb{Z}_{60000}$ & 219.7 & 4594.6\\
\bottomrule
\end{tabular}
\end{center}
 \caption{Second experiment: bounded support and bounded minimum}\label{tab:experiment2} \label{table4.2}
\end{table}

\subsection{Experiment on $\mathbb{Z}_N \times \mathbb{Z}_N$}
We carry out an experiment on groups of the form $\mathbb{Z}_{N} \times \mathbb{Z}_{N}$. 

In this experiment, we randomly choose a subset $T \subseteq \widehat{\mathbb{Z}_{N} \times \mathbb{Z}_{N}}$ with $|T| = 10$ and randomly choose $10$ real-valued functions $g_j$, $j=1,\dots, 10$ on $\mathbb{Z}_{N} \times \mathbb{Z}_{N}$ satisfying:
\begin{enumerate}[(i)]
\item $\operatorname{supp}(g_j)\subseteq T$.
\item $\operatorname{Re}(\widehat{g_j}(\chi)),\operatorname{Im}(\widehat{g_j}(\chi)) \in [-10,10]$ for each $\chi \in T$.
\end{enumerate}
We apply Algorithm~\ref{alg:4} to find a sparse FSOS certificate of $f=\sum_{j=1}^{10}|g_j|^2$. For each value of $N$, we repeat the experiment $10$ times, and record the sparsity and the running time. In Table~\ref{tab:NxN-fsos7}, we record numerical results for different values of $N$. The second column shows the mean sparsity of FSOS certificates found by Algorithm~\ref{alg:4}. The third column is the mean time cost for each example.  In Figure~\ref{fig:NxN-fsos7}, we plot the running time cost of Algorithm~\ref{alg:4} versus the group size $N^2$. It is notable that:
\begin{enumerate}[(i)]
\item The FSOS certificate found by  Algorithm~\ref{alg:4} always has cardinality around $50$. This is because the FSOS sparsity of $f$ is at most $10$, which can be seen from its construction.
\item Algorithm~\ref{alg:4} can work for groups of  size up to $2.5 \cdot 10^7$.
\item Figure~\ref{fig:NxN-fsos7} roughly fits $N\log(N)$ since in these examples, the factor $k$ appeared in Proposition~\ref{prop:complexity} can be regarded as a constant.
\end{enumerate}
\begin{table}[htbp]
\begin{center}
 \begin{tabular}{p{70pt}p{70pt}p{50pt}}
\toprule   group  &FSOS sparsity&  time(s) \\
\midrule
$ \mathbb{Z}_{500}\times \mathbb{Z}_{500}$ & 51.4	&24.3 \\
$ \mathbb{Z}_{1000}\times \mathbb{Z}_{1000}$ & 50.8	&63.3\\
$ \mathbb{Z}_{1500}\times \mathbb{Z}_{1500}$ & 49.2	&123.3\\
$ \mathbb{Z}_{2000}\times \mathbb{Z}_{2000}$ & 49.6	&208.4\\
$ \mathbb{Z}_{2500}\times \mathbb{Z}_{2500}$ & 50.6	&318.6\\
$ \mathbb{Z}_{3000}\times \mathbb{Z}_{3000}$ & 50.2	&457.6\\
$ \mathbb{Z}_{3500}\times \mathbb{Z}_{3500}$ & 49.2	&632.8\\
$ \mathbb{Z}_{4000}\times \mathbb{Z}_{4000}$ & 49.8	&831.7\\
$ \mathbb{Z}_{4500}\times \mathbb{Z}_{4500}$ & 48.2	&1066.0\\
$ \mathbb{Z}_{5000}\times \mathbb{Z}_{5000}$ & 50.6	&1325.1\\
\bottomrule
\end{tabular}
\end{center}
 \caption{Third experiment: bounded FSOS support}\label{tab:NxN-fsos7}\label{table4.3}
\end{table}
\begin{figure}[!ht]
\centering
\begin{tikzpicture}
\begin{axis}[
    xlabel=Cardinality of group $(N^2)$,
    scaled ticks=false,
    xtick={0,10000000,20000000},
    xticklabel style={/pgf/number format/precision=8},
    ylabel=Mean value of running time(s),
    legend style={at={(0.5,-0.2)},anchor=north}
    ]
\addplot[sharp plot,blue] plot coordinates {
(250000 , 24.27591207)
(1000000 , 63.26254083)
(2250000 , 123.3243038)
(4000000 , 208.4085652)
(6250000 , 318.5596792)
(9000000 , 457.5723177)
(12250000 , 632.8103091)
(16000000 , 831.5679976)
(20250000 , 1066.024842)
(25000000 , 1325.139295)
};
\addlegendentry{Group form: $\mathbb{Z}_N\times \mathbb{Z}_N$}
\end{axis}
\end{tikzpicture}
\caption{Time complexity}
\label{fig:NxN-fsos7}
\end{figure}
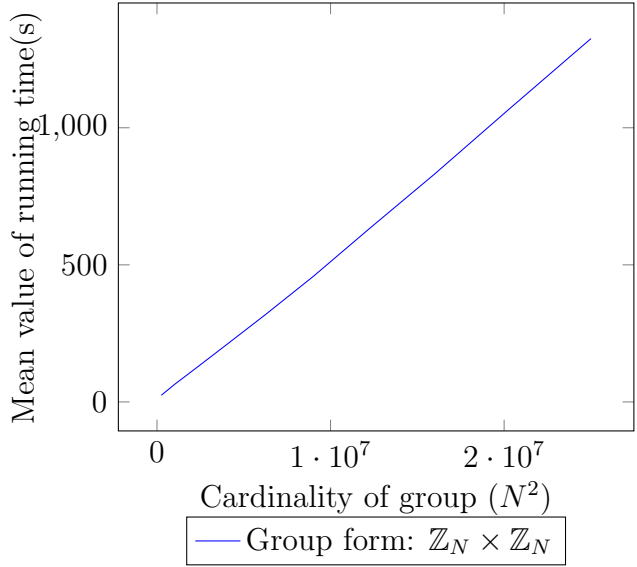

\section{\red{Bounds on FSOS sparsities}}\label{sec:square-root:bound}
\red{In this section we focus on bounding the FSOS sparsity of a nonnegative function. In Subsection~\ref{subsec:dominating constant term}, we prove that for functions with dominating constant terms, their FSOS sparsities are bounded by their Fourier sparsities. In Subsection~\ref{subsec:error bound}, we prove that a suitable perturbation of the given function admits an FSOS supported on the Fourier support of the original function. As a consequence, we obtain an error analysis of step~\ref{alg4:step4} in Algorithm~\ref{alg:4} which provides a rationale for the square-root-based basis selection method we proposed in Subsection~\ref{subsec:basis selection}.}
\if \cyan{by giving a perturbed FSOS with  a given set $S$ (or a given sparsity $s$).}\fi 
\subsection{FSOS sparsities of functions with dominating constant terms}\label{subsec:dominating constant term} We first bound the FSOS sparsity of a nonnegative function with dominating constant term.
\begin{proposition}\label{Newcoro310}
Let $G$ be a finite abelian group and let $S$ be a symmetric subset of $\widehat{G}$, $f=\sum_{\chi\in S}\widehat{f}(\chi)\chi$ be a real-valued function on $G$ such that $ \widehat{f}(\chi_0)  \geq \sum_{\chi\neq \chi_0}|\widehat{f}(\chi)|$, then the FSOS sparsity of $f$ is at most $|S|$.
\end{proposition}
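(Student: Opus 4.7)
The plan is to build an explicit FSOS certificate for $f$ whose underlying characters form a subset of $S$, by pairing each nontrivial $\chi$ with its inverse $\chi^{-1}$ and using the dominance of $a_{\chi_0}$ to absorb the constant mass required for each pair.

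First I would exploit two symmetries. Since $S$ is symmetric and $f$ is real-valued, $a_{\chi^{-1}} = \overline{a_\chi}$ for every $\chi \in S$. Consequently $S \setminus \{\chi_0\}$ partitions canonically into unordered pairs $\{\chi,\chi^{-1}\}$ with $\chi^2 \neq \chi_0$, together with singletons $\{\chi\}$ corresponding to nontrivial $2$-torsion characters (where $\chi = \chi^{-1}$ and $a_\chi$ is real). Assuming $f \not\equiv 0$, the hypothesis forces $a_{\chi_0} > 0$, so $\chi_0 \in S$.

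Next, for each pair or singleton I would produce a single square that reproduces the relevant Fourier contribution. Expanding
\[
|s - t\chi|^2 = |s|^2 + |t|^2 - \overline{s}\,t\,\chi - s\,\overline{t}\,\chi^{-1},
\]
I would choose $s_\chi = \sqrt{|a_\chi|}$ and $t_\chi = -a_\chi / \sqrt{|a_\chi|}$ (with the obvious real analogue in the $2$-torsion case) so that the cross terms of $|s_\chi - t_\chi\chi|^2$ exactly match $a_\chi \chi + \overline{a_\chi}\chi^{-1}$, while the constant contribution is precisely $2|a_\chi|$ (respectively $|a_\chi|$ in the $2$-torsion case). This saturation is an AM--GM equality: given the required magnitude of the cross coefficient, $|s|^2+|t|^2 \geq 2|s||t|$ pins down the minimal constant needed.

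Summing these squares over all pairs and $2$-torsion singletons then reproduces $\sum_{\chi \neq \chi_0} a_\chi \chi$ in cross terms and contributes exactly $\sum_{\chi \neq \chi_0} |a_\chi|$ to the constant. The dominance hypothesis ensures that $r := a_{\chi_0} - \sum_{\chi \neq \chi_0}|a_\chi| \geq 0$, and I would absorb it as the extra square $(\sqrt{r})^2$. The resulting FSOS certificate uses only characters from $\{\chi_0\} \cup \{\text{one representative per pair}\} \cup \{\text{nontrivial $2$-torsion characters in }S\}$, a subset of $S$, so its FSOS sparsity is at most $|S|$. I do not foresee any substantial obstacle: the argument is a direct pair-by-pair construction driven by AM--GM, and the only care required is keeping the $2$-torsion case (where $\chi = \chi^{-1}$ and $a_\chi \in \mathbb{R}$) bookkept separately.
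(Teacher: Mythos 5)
Your argument is correct, and it takes a genuinely different route from the paper. The paper's proof assembles a single arrowhead Gram matrix $Q$ supported on all of $S$ (nonzero entries only on the diagonal and the row/column indexed by $\chi_0$) and then verifies positive semidefiniteness by inspecting principal minors of $Q$, which amounts to a Schur-complement computation using the dominance hypothesis. You instead build the certificate constructively, one character pair at a time: each $\{\chi,\chi^{-1}\}$ contributes a rank-one square $|s_\chi - t_\chi\chi|^2$ supported on just $\{\chi_0,\chi\}$, with the cross coefficients matched exactly and the constant cost $2|a_\chi|$ (or $|a_\chi|$ in the $2$-torsion case) saturating AM--GM; the leftover mass $r = a_{\chi_0} - \sum_{\chi\neq\chi_0}|a_\chi| \geq 0$ is absorbed as a final constant square. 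This is really a per-block rank-one factorization of the same arrowhead structure, but carried out explicitly rather than certified via minors. What your approach buys: (i) an explicit list of the squaring functions $h_i$ with no linear algebra needed, and (ii) a strictly sharper support bound, since you keep only one representative per inverse pair, so the Fourier support of your certificate has cardinality roughly $(|S|+1)/2$ rather than the full $|S|$ the paper's $Q$ uses. One point worth making explicit for completeness: when $a_\chi = 0$ for some $\chi\in S\setminus\{\chi_0\}$, the formula $t_\chi = -a_\chi/\sqrt{|a_\chi|}$ is undefined, but those characters contribute nothing to $f$ and can simply be omitted; and when $f\equiv 0$ one takes the empty sum of squares.
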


\begin{proof}
It is sufficient to prove that $f$ has a Gram matrix whose number of nonzero rows is at most $|S|$. Let $S=\{\chi_0,\chi_1,...,\chi_k\}$. Here $\chi_0$ is  the trivial character. We define $Q\in \mathbb{C}^{\widehat{G}\times \widehat{G}}$ by:
\[
Q(\chi, \chi') = \begin{cases}
|G|\left(a_{\chi_0}-\frac{1}{2}\sum_{\chi \neq \chi_0}|\widehat{f}(\chi)|\right),~\text{if}~\chi = \chi'  = \chi_0,\\
\frac{|G|}{2}\widehat{f}(\chi'), ~\text{if}~\chi = \chi_0 \ne \chi', \\
\frac{|G|}{2} \widehat{f}(\chi^{-1}), ~\text{if}~\chi' = \chi_0 \ne \chi, \\
\frac{|G|}{2}|\widehat{f}(\chi)|,~\text{if}~\chi = \chi' \ne \chi_0, \\
0,~\text{otherwise}.
\end{cases}
\]
Obviously nonzero elements of $Q$ are on rows and columns labelled by characters in $S$. Moreover, $Q$ is an arrowhead matrix, i.e., nonzero elements of $Q$ either lie in the diagonal or in the first row or first column.

It is left to prove $Q\succeq 0$. It is sufficient to verify the nonnegativity of each principal minor of $Q$. Since $Q$ is an arrowhead matrix, a principal submatrix of $Q$ not involving elements in the first row  and column is a diagonal matrix. Thus it is straightforward to verify that the determinant of such a principal submatrix is nonnegative.

If a principal submatrix $Q$ does include the first row and column, then we may compute its determinant directly. For instance, the determinant of $Q(S,S)$ is
  \[
  \operatorname{det}(Q(S,S))=\left(Q(\chi_0,\chi_0)-\sum_{\chi \neq \chi_0 \in S}\frac{Q(\chi_0,\chi){Q(\chi,\chi_0)}}{Q(\chi,\chi)}\right)\prod_{\chi \neq \chi_0 \in S}Q(\chi,\chi),
  \]
where $Q(S,S)$ denotes the principal submatrix of $Q$ obtained by selecting rows and columns labelled by characters in $S$. For each $\chi \in \widehat{G}$ we have
  \[Q(\chi_0,\chi)=\frac{|G|}{2} \widehat{f}(\chi)=\frac{|G|}{2}\overline{\widehat{f}(\chi^{-1})}= \overline{Q(\chi,\chi_0)}.\]

Since $f$ has a dominating constant term, we have
  \[Q(\chi_0,\chi_0)-\sum_{\chi \neq \chi_0 \in S}\frac{Q(\chi_0,\chi){Q(\chi,\chi_0)}}{Q(\chi,\chi)}=|G|\left(\widehat{f}(\chi_0)- \sum_{\chi \neq \chi_0}|\widehat{f}(\chi)| \right) \geq 0.\]
The nonnegativity of other principal minors of $Q$ can be proved similarly and this completes the proof.
\end{proof}

\begin{remark}
We recall that Theorem~\ref{thm:Parrilo} supplies an upper bound $3 d \log_2(N/d)$ for a nonnegative function $f$ on $\mathbb{Z}_N$, where $d$ is the  degree of $f$. But according to Proposition~\ref{Newcoro310}, a degree $d$ nonnegative function with dominating $\widehat{f}(\chi)$ on $\mathbb{Z}_N$ has FSOS sparsity at most $(2d+1)$, which is much smaller than $3 d \log_2(N/d)$ for large $N$.
\end{remark}

\subsection{FSOS support of a perturbation}\label{subsec:error bound}
Let $G$ be a finite abelian group and let $f$ be a function on $G$. We  define the $\ell^1$ norm of $\widehat{f}$ as
\begin{equation}\label{eq:def:L1-norm}
  \|\widehat{f}\|_{\ell^1}\coloneqq\sum_{\chi \in \widehat{G}} |\widehat{f}(\chi)|.
\end{equation}

\red{We begin with the following lemma, which can be regard as an improvement of Lemma~1 in \cite{bach2022exponential} with  finite abelian group constraints.}

\begin{lemma}\label{lem:detailed:L1-norm}
Let $G$ be a finite abelian group and let $f$ be a real-valued function on $G$. If $S$ is a subset of $\widehat{G}$ such that
\[\operatorname{supp}(f)\subseteq S\cdot S^{-1}\coloneqq \{\chi'\chi^{-1}:\chi,\chi' \in S\} ,\]
then $ f+\|\widehat{f}\|_{\ell^1}$ admits  an FSOS with support $S$.
\end{lemma}
\begin{proof}
It is clear that  $S\cdot S^{-1}$  is a symmetric set.  
Let $S_0$ be the set of symmetric elements in $S\cdot S^{-1}$, 
\[S_0\coloneqq \{\chi \in S\cdot S^{-1}:\chi^{-1}=\chi\},\] then we can decompose the set  $S\cdot S^{-1} \backslash S_0\coloneqq \{\chi \in S\cdot S^{-1}: \chi \notin S_0\}$ into two disjoint  sets: 
\[S\cdot S^{-1} \backslash S_0=S_{1}\cup S_{-1}, ~~ S_{-1}=S_1 ^{-1}= \{\chi^{-1}:\chi \in S_1\},\]
and there are no mutually inverse elements in the set $S_1$. Since $f$ is a real-valued function, $f=\overline{f}$, and $\widehat{f}(\chi^{-1})=\overline{\widehat{f}(\chi)}$ holds for all $\chi \in \widehat{G}$, then $f$ can be expressed in the following form:
\[f=\sum_{\chi \in S_1}\widehat{f}(\chi)\chi+\sum_{\chi \in S_1}\overline{\widehat{f}(\chi)}\chi^{-1}+\sum_{\chi \in S_0}{\widehat{f}(\chi)}\chi,\]
and 
\[\|\widehat{f}\|_{\ell^1}=2\sum_{\chi \in S_1}|\widehat{f}(\chi)|+\sum_{\chi \in S_0}|\overline{\widehat{f}(\chi)}|.\]

Since $S_1$ and $S_0$ are subsets of $ S\cdot S^{-1}$, we have that for any $\chi \in S_0\cup S_1$, there exists $\chi',\chi'' \in S$ such that $\chi =\chi'' \chi'^{-1}$. Then we have  
\[\|f\|_{\ell^1}+f=\sum_{\chi \in S_1}\left( 2|\widehat{f}(\chi)|+\widehat{f}(\chi)\chi+\overline{\widehat{f}(\chi)}\chi^{-1} \right)+\sum_{\chi \in S_0}\left( |\widehat{f}(\chi)|+\widehat{f}(\chi)\chi \right). \]

We show below that $\|f\|_{\ell^1}+f$  has an FSOS with support $S$:

\begin{itemize}
  \item For  ${\chi=\chi''\cdot \chi'^{-1} \in S_1}$, $ 2|\widehat{f}(\chi)|+\widehat{f}(\chi)\chi+\overline{\widehat{f}(\chi)}\chi^{-1} ={|\widehat{f}(\chi)|} \left| \chi'+{\frac{\widehat{f}(\chi)}{|\widehat{f}(\chi)|}} \chi''  \right|^2$.
  \item For ${\chi=\chi'' \cdot \chi'^{-1}\in S_0}$, since  $\overline{f}=f$, $\widehat{f}(\chi)=\overline{\widehat{f}(\chi)}\in \mathbb{R}$,  we have \[|\widehat{f}(\chi)|+\widehat{f}(\chi)\chi=\frac{|\widehat{f}(\chi)|}{2} \left|1+\chi\right|^2=\frac{|\widehat{f}(\chi)|}{2} \left|\chi''+\frac{\widehat{f}(\chi)}{|\widehat{f}(\chi)|}\chi'\right|^2.\]
\end{itemize}
\end{proof}

Based on Lemma \ref{lem:detailed:L1-norm}, we  present the following theorem to show that a perturbation of $f$ has an FSOS supported in a given support.
\begin{theorem}\label{thm:square-root_based_error}
Let $G$ be a finite abelian group and let $f\neq 0$ be a nonnegative function on $G$. Assume that $S$ is a subset of $\widehat{G}$ such that $\operatorname{supp}(f)\subseteq S$ and $S=S^{-1}$. We define $h$ by
\begin{equation}\label{hfun}
\widehat{h}(\chi)=\begin{cases}
                                       \widehat{\sqrt{f}}(\chi), & \mbox{if } \chi \in S,\\
                                       0, & \mbox{if } \chi \notin S. \\
                                     \end{cases}
                                     \end{equation}
            
Then $f+M$ has an FSOS supported in $S$ for 
\begin{equation} \label{Mvalue}
M\coloneqq  2\|\widehat{\sqrt{f}}-\widehat{h}\|_{\ell^1} \cdot\|\widehat{h}\|_{\ell^1}+\|\widehat{\sqrt{f}}-\widehat{h}\|_{\ell^1}^2.
\end{equation}
\end{theorem}
\red{Before we proceed to the proof of the theorem, we remark that Theorem \ref{thm:square-root_based_error} 
actually provides a rationale of the square-root-based basis selection method proposed in Subsection \ref{subsec:basis selection}.
Indeed, we can regard the function $h$ in Theorem~\ref{thm:square-root_based_error} as the truncation of $\sqrt{f}$ by $S$.  
Suppose $S$ consists of characters with large coefficients in the Fourier expansion of $\sqrt{f}$. Then $\sqrt{f}-h$ only contains terms of $\sqrt{f}$ with small coefficients. Thus $\|\widehat{\sqrt{f}}-\widehat{h}\|_{\ell^1}$ and $M$ defined in \eqref{Mvalue} are small. As a consequence, Theorem \ref{thm:square-root_based_error} implies that a small perturbation $f+M$ admits an FSOS supported in $S$.}

\begin{proof}
Define $g \coloneqq f-|h|^2 $, since $\operatorname{supp}(f) \subseteq S$,  $f\neq 0$, $\widehat{f}(\chi_0)=\frac{1}{|G|}\sum_{x \in G}f(x)>0$. Hence we have 
\[\chi_0 \in S, ~~\operatorname{supp}(f) \subseteq S \subseteq S \cdot S.\]
Moreover, we have 
\[\operatorname{supp}(h^2)  \subseteq   S\cdot S\coloneqq\{\chi\cdot \chi':\chi,\chi' \in S\}=S\cdot S^{-1}.\]

Since $S^{-1}=S$ and $\sqrt{f}$ is real-valued, we have $\overline{\widehat{h}(\chi)}={\widehat{h}(\chi^{-1})}$ holds for all $\chi \in \widehat{G}$, thus $h$ is also a real-valued function and $|h|^2=h^2$. We denote $S'\coloneqq \widehat{G}\setminus S$. According to (\ref{hfun}), we have 
\[g:=f-|h|^2=\left(\sum_{\chi' \in S'} \widehat{\sqrt{f}}(\chi')\chi'\right)^2+2\cdot \left(\sum_{\chi \in S } \sum_{\chi' \in S'} \widehat{\sqrt{f}}(\chi)\widehat{\sqrt{f}}(\chi')\chi\chi'\right).\]
We have
\begin{equation}\label{gfunnew}
\|\widehat{g}\|_{\ell^1}\leq \left(\sum_{\chi' \in S'}  \left|\widehat{\sqrt{f}}(\chi')\right|\right)^2+2\left(\sum_{\chi \in S } \sum_{\chi' \in S'}\left| \widehat{\sqrt{f}}(\chi)\widehat{\sqrt{f}}(\chi')\right|\right).
\end{equation}
Let
\begin{equation}\label{Mper}
M\coloneqq  2\|\widehat{\sqrt{f}}-\widehat{h}\|_{\ell^1} \cdot\|\widehat{h}\|_{\ell^1}+\|\widehat{\sqrt{f}}-\widehat{h}\|_{\ell^1}^2=2\left(\sum_{\chi \in S } \sum_{\chi' \in S'}\left| \widehat{\sqrt{f}}(\chi)\widehat{\sqrt{f}}(\chi')\right|\right)+\left(\sum_{\chi' \in S'}  \left|\widehat{\sqrt{f}}(\chi')\right|\right)^2.
\end{equation}
By (\ref{gfunnew}) and (\ref{Mper}),  we have 
\[ \|\widehat{g}\|_{\ell^1} \leq M.\]
By Lemma \ref{lem:detailed:L1-norm}, $M+g$ has an FSOS with support $S$. Hence $f+M=|h|^2+(M+g)$ has an FSOS with support $S$. 

\end{proof}


The following corollary shows that for a given $s$ and function $f$, an appropriate perturbation of $f$ admits an FSOS of sparsity at most $s$.

\begin{corollary}\label{coro:speed-of-FSOS}
\red{Let $G$ be a finite abelian group and let $f \ne 0 $ be a nonnegative function on $G$. Then for any positive integer $s >1+|\operatorname{supp}(f)|$, $f+M_{s'} $ admits an FSOS with sparsity at most $s$, where $s'=s-1-|\operatorname{supp}(f)| $ and 
\[M_{s'}=\|\widehat{\sqrt{f}}\|_{\ell^1}^2\left(3-5\frac{s'}{|G|}+2\frac{s'^2}{|G|^2}\right).
\] 
}
\end{corollary}
\begin{proof}
Since $\sqrt{f}$ is nonnegative, $|\widehat{\sqrt{f}}(\chi)|=|\widehat{\sqrt{f}}(\chi^{-1})|$ for all $\chi \in \widehat{G}$. Without loss of generality, we can arrange $\widehat{G}$ in descending order of their absolute values in $\widehat{\sqrt{f}}$, with any two mutually inverse elements always adjacent  i.e. $|\widehat{\sqrt{f}}(\chi_1)|\geq |\widehat{\sqrt{f}}(\chi_2)|\geq \cdots \geq |\widehat{\sqrt{f}}(\chi_{|G|})|$, and for any integer $i>0$, $\chi_i^{-1}$ must be one of the three characters $\chi_{i-1}$, $\chi_{i}$ or $\chi_{i+1}$. Then for any integer $k>0$, the sum of the first $k$ largest coefficients satisfies
\[\sum_{i=1}^{k}|\widehat{\sqrt{f}}(\chi_i)|\geq k\frac{\|\widehat{\sqrt{f}}\|_{\ell^1} }{|G|}.\]
In this case, either the set $\{\chi_1,\chi_2,\cdots \chi_s\}$ is symmetric, or the set $\{\chi_1,\chi_2,\cdots \chi_s,\chi_{s+1}\}$ is symmetric. Let $S'$ be the symmetric set among these two sets, $S=\operatorname{supp}(f)\cup S'$, and let $h$ be the truncation of $\sqrt{f}$ at set $S$ defined in (\ref{hfun}). Then we have 
\[\|\widehat{\sqrt{f}}-\widehat{h}\|_{\ell^1}\leq \|\widehat{\sqrt{f}}\|_{\ell^1}-s'\frac{\|\widehat{\sqrt{f}}\|_{\ell^1} }{|G|}.\]
According to (\ref{Mper}), we have 
\[M \leq \|\widehat{\sqrt{f}}\|_{\ell^1}^2-s'\frac{\|\widehat{\sqrt{f}}\|_{\ell^1}^2 }{|G|}+2\left(\|\widehat{\sqrt{f}}\|_{\ell^1}-s'\frac{\|\widehat{\sqrt{f}}\|_{\ell^1} }{|G|}\right)^2=\|\widehat{\sqrt{f}}\|_{\ell^1}^2\left(3-5\frac{s'}{|G|}+2\frac{s'^2}{|G|^2} \right)=M_{s'}.\]
By Theorem~\ref{thm:square-root_based_error}. we can conclude  that  $f+M_{s'} $ admits an FSOS with sparsity at most $s$.
\end{proof}

We can make an estimate of $\|\widehat{\sqrt{f}}\|_{\ell^1}$ for $0 \leq f \leq  1$.  The Fourier coefficient $\widehat{f}(\chi_0)$ equals to $\sum_ {\chi \in \widehat{G}}|\widehat{\sqrt{f}}(\chi)|^2$  as
\[ \widehat{f}(\chi_0)=\frac{1}{|G|}\sum_{x \in G}f(x).\]
Hence, we have $0 \leq \sum_ {\chi \in \widehat{G}}|\widehat{\sqrt{f}}(\chi)|^2\leq 1$ and $\|\widehat{\sqrt{f}}\|_{\ell^1}=\sum_ {\chi \in \widehat{G}}|\widehat{\sqrt{f}}(\chi)|\leq \sqrt{|G|}$.

We remark that Theorem \ref{thm:square-root_based_error} and corollary~\ref{coro:speed-of-FSOS} only depend on the coefficients  of  $\sqrt{f}$ and the cardinality of $G$, regardless of the degree of $f$ and the structure of $G$. Therefore, this result can be regarded as a complement to results in \cite{slot2023sum} for functions with high degree.

\section{Applications of FSOS}\label{sec6}
We conclude this paper by a discussion on applications of FSOS in combinatorial optimization problems and sum of Hermitian squares of polynomials on tori.
\subsection{Combinatorial  optimization}
Combinatorial optimization  is a very natural resource of applications of FSOS \cite{CC2012,CKP2000,CK2005,Lecoutre2013,PR2002,Tsang2014}. 
\subsubsection{\red{Certificate problem for MAX-SAT}}
It has been  proved in \cite{yang2022short} that FSOS supply short  certificates for MAX-SAT, MIN-SAT and UNSAT problems.
In addition, MAX-2SAT and MAX-3SAT problems can be solved by optimizing polynomials on $\mathbb{Z}_2^n = \{-1,1\}^n$. In order to reduce the size of the related SDP problems, some choices of monomial bases are proposed~\cite{VVH2008}. These monomial bases perform well on some benchmark problems but poorly on others. As an example, we consider the weighted MAX-2SAT problem corresponding to the function $g: \mathbb{Z}_2^{10} \to \mathbb{Z}$:
\begin{eqnarray*}
g= &&50450 + 234x_3 - 1386x_2 - 1389x_1 + 502x_4 + 3056x_5 - 4692x_6 - 2142x_7 - 1312x_8 \\
&&- 4645x_9 + 3787x_{10}   - 3399x_1x_2 - 1140x_1x_3  - 282x_2x_3 - 2413x_1x_5 - 884x_2x_4 \\
&&- 2212x_1x_6 + 3457x_2x_5
+ 4462x_3x_4 - 2002x_5x_{10}+  2057x_3x_9  + 4097x_1x_7 + 1707x_2x_6  \\
&&+ 3419x_1x_8 - 4102x_2x_7 - 976x_3x_6 - 2403x_4x_5  - 1245x_1x_9  - 3786x_2x_8 - 1122x_6x_7   \\
&&+ 1014x_3x_7 + 3139x_4x_6 + 483x_1x_{10} + 4417x_2x_9 - 854x_3x_8  - 2037x_5x_6   
- 1678x_2x_{10}\\
&& + 667x_6x_8   
 - 491x_1x_4   - 981x_4x_8 + 4848x_5x_7 + 4085x_3x_{10} + 1129x_4x_9 - 4936x_5x_8 \\
&& - 2628x_4x_{10} + 2787x_5x_9  - 936x_3x_5 + 640x_6x_9 + 1874x_7x_8  - 707x_6x_{10}  + 778x_7x_9 \\
&&+ 3813x_7x_{10}  - 2764x_8x_9 + 3038x_8x_{10} + 2170x_9x_{10}   + 6x_4x_7.
\end{eqnarray*}
The monomial basis suggested by \cite{VVH2008} consists of all monomials of degree at most $2$, i.e.
\[M_{ap}=\{1\} \cup \{x_i:i=1,2,\cdots,10\} \cup \{x_ix_j:1\leq i < j \leq 10\}. \]
It can be checked that there exists no SOS supported in $M_{ap}$. However, our algorithm produces an FSOS of $g$ of sparsity $52< | M_{ap}|=56$. More examples can be found in \cite{yang2023fourier,yang2023lower}.
\subsubsection{The pigeon-hole principle}
In the following, we consider a more remarkable application in combinatorial optimization: the proof complexity of the pigeon-hole principle. First of all, we recall that the pigeon-hole principle says that $n + 1$
pigeons cannot be placed into $n$ holes unless a hole contains more than one pigeon. For each positive integer $n$, we define a conjunctive normal form (CNF) formula in $(n+1)n$ variables $\{p_{ij}\}_{i=1,j=1}^{n+1,n}$:
\[
\overline{\operatorname{PHP}_n^{n+1}} \coloneqq \bigwedge_{i=1}^{n+1}\left({p_{i1}\lor p_{i2} \lor \cdots \lor p_{in} }\right)\land  \bigwedge_{1 \leq i<k \leq n+1, 1\leq j \leq n}
\left(\lnot p_{ij}\lor \lnot p_{kj}\right).
\]
Then the pigeon-hole principle is equivalent to the statement that $\overline{\operatorname{PHP}_n^{n+1}}$ is unsatisfiable for all $n\in \mathbb{N}$. The standard technique to prove the unsatisfiability of a CNF formula is the resolution refutation\cite{davis1994computability,bonet2007resolution}. According to the theorem that follows, proving the pigeon-hole principle by resolution refutation is difficult.
\begin{theorem}\cite[Theorem 16, Corollary 18]{bonet2007resolution}
For sufficiently large $n$, any resolution refutation of $\overline{\operatorname{PHP}_n^{n+1}}$ requires $2^{n/20}$ inference steps.
\end{theorem}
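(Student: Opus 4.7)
The plan is to follow Haken's bottleneck counting method, which is the classical route to exponential resolution lower bounds for the pigeon-hole principle. The first step is to introduce a large family of \emph{critical assignments}: for each $i \in [n+1]$ and each bijection $\sigma : [n+1] \setminus \{i\} \to [n]$, set $p_{j,\sigma(j)} = \text{true}$ for $j \neq i$ and all remaining variables to false. Each critical assignment $\alpha_{i,\sigma}$ satisfies every axiom of $\overline{\operatorname{PHP}_n^{n+1}}$ except the pigeon-axiom $p_{i,1} \vee \cdots \vee p_{i,n}$, so it falsifies exactly one initial clause. There are $(n+1)\cdot n!$ such assignments.

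Given a resolution refutation $\pi$ ending at the empty clause $\bot$, I would trace for each critical $\alpha$ a path through $\pi$ as follows. Starting at $\bot$ (which is falsified by every assignment), and at each resolution step $C_1, C_2 \vdash C$ with $C$ falsified by $\alpha$, at least one of $C_1, C_2$ is also falsified by $\alpha$; following such a parent recursively produces a path from $\bot$ back to an initial clause, which must be the pigeon-axiom for the missing pigeon $i$. Thus each critical assignment canonically selects a path in $\pi$.

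The core idea is then to assign each clause $C$ in $\pi$ a \emph{complexity} $\mu(C)$ measuring how many pigeons are "essentially committed" by $C$, arranged so that $\mu(\bot) = 0$, the distinguished initial clause has $\mu \geq n$, and $\mu$ changes by at most one from a clause to its parent along any critical path. Consequently, the path of any critical $\alpha$ must cross some \emph{medium} clause $C$ with $\mu(C) \in [n/3,\,2n/3]$. The payoff is that a medium clause, since it commits $\Theta(n)$ pigeons to specific holes, can lie on the path of only a $2^{-\Omega(n)}$ fraction of critical assignments, by an elementary counting / random-restriction estimate. Comparing the total number of critical assignments with the maximum number that a single medium clause can "absorb" forces $\pi$ to contain at least $2^{\Omega(n)}$ medium clauses.

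The main obstacle is the design of $\mu$ and the precise probabilistic restriction estimate needed to pin down the constant $1/20$ in the exponent. Concretely, one typically applies a random restriction that, with high probability, collapses $\overline{\operatorname{PHP}_n^{n+1}}$ to a smaller pigeon-hole instance on which $\mu$ continues to discriminate, and then balances the restriction probability against the medium-complexity window so that the bottleneck bound exceeds $2^{n/20}$. All other steps are routine once $\mu$ and the restriction parameters are calibrated, but this calibration is the delicate combinatorial heart of the argument \cite{bonet2007resolution}.
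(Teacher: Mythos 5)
This statement is not proved in the paper at all: it is quoted verbatim from the reference \cite{bonet2007resolution} (their Theorem~16 and Corollary~18) purely to motivate the contrast with the short FSOS certificate in the subsequent proposition. There is therefore no ``paper's own proof'' to compare you against. Your sketch is a reasonable high-level recollection of Haken-style bottleneck counting, which is indeed the ancestor of the arguments in that reference, but as it stands it is a plan rather than a proof: you explicitly defer the definition of the complexity measure $\mu$, the precise restriction lemma, and the calibration that yields the specific constant $1/20$, and those deferred steps are exactly where the entire content of the cited theorem lives. Two substantive cautions on the sketch itself. First, the progress measure is usually arranged so that $\mu$ is \emph{subadditive} under resolution ($\mu(C) \le \mu(C_1)+\mu(C_2)$) rather than changing by at most one per step, with $\mu$ large at $\bot$ and at most~$1$ on the axioms (your orientation is reversed, and ``changes by at most one'' is not the property one typically has). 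Second, the claim that a medium-complexity clause absorbs only a $2^{-\Omega(n)}$ fraction of critical assignments is the crux and cannot be waved off as ``elementary counting''; in \cite{bonet2007resolution} this is done through a carefully tuned restriction argument, and reproducing the exponent $n/20$ requires matching their parameters. If you intend this as a citation rather than a derivation, it is fine; if you intend it as a proof, the gap is the missing combinatorial core.
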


It turns out that, however, we are able to prove the pigeon-hole principle by an FSOS  certificate of sparsity $O(n^2)$. To this end, we define:
\[p_n:\mathbb{Z}_n^{n+1} \mapsto \mathbb{C},~ p_n(x_1,x_2,\ldots,x_{n+1})=\sum_{1 \leq i<j\leq n+1}\operatorname{Eqv}(x_i,x_j),\]
where
\[\operatorname{Eqv}:\mathbb{Z}_n^2 \mapsto \mathbb{C},~\operatorname{Eqv}(x,y)=\begin{cases}
                                                                     1, & \mbox{if } x=y, \\
                                                                                  0, & \mbox{otherwise}.
\end{cases} \]

\begin{proposition}\label{prop6.2}
For each $n\in \mathbb{N}$, the unsatisfiability of $\overline{\operatorname{PHP}_n^{n+1}}$ is equivalent to the positivity of $p_n$. Moreover, $p_n$ admits an FSOS of sparsity at most $O(n^2)$.
\end{proposition}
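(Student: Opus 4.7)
The plan has two parts. First, I would establish the equivalence between unsatisfiability of the CNF formula and strict positivity of $p_n$. Observe that $\operatorname{Eqv}$ is $\{0,1\}$-valued, so $p_n(x)$ is a nonnegative integer for every $x\in\mathbb{Z}_n^{n+1}$ and $p_n(x)=0$ if and only if $x_1,\dots,x_{n+1}$ are pairwise distinct. In particular, $p_n>0$ everywhere if and only if no tuple $(x_1,\dots,x_{n+1})\in\mathbb{Z}_n^{n+1}$ has pairwise distinct entries. I would then show the latter is equivalent to unsatisfiability of $\overline{\operatorname{PHP}_n^{n+1}}$: given any satisfying $\{0,1\}$-assignment to $\{p_{ij}\}$, the ``at least one hole'' clauses let one pick, for each pigeon $i$, some hole $x_i$ with $p_{i,x_i}=1$, while the conflict clauses $\neg p_{ij}\vee\neg p_{kj}$ force the resulting $x_i$'s to be pairwise distinct; conversely, any pairwise distinct tuple yields a satisfying assignment by setting $p_{ij}=1$ when $x_i=j$ and $p_{ij}=0$ otherwise.

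For the sparsity bound, the plan is to exploit a Parseval-type identity on $\mathbb{Z}_n$. Writing $\omega=e^{2\pi i/n}$, I define characters $\chi_i^{(k)}(x_1,\dots,x_{n+1})=\omega^{k x_i}$ of $\mathbb{Z}_n^{n+1}$ for $0\le k\le n-1$ and $1\le i\le n+1$, and set $S_k=\sum_{i=1}^{n+1}\chi_i^{(k)}$, so that $S_0=n+1$ is the constant function. Using $\operatorname{Eqv}(x_i,x_j)=\tfrac{1}{n}\sum_{k=0}^{n-1}\chi_i^{(k)}\overline{\chi_j^{(k)}}$, orthogonality of characters gives
\[
\sum_{i,j=1}^{n+1}\operatorname{Eqv}(x_i,x_j)\;=\;\frac{1}{n}\sum_{k=0}^{n-1}\bigl|S_k\bigr|^2.
\]
The left-hand side equals $(n+1)+2p_n$ after separating the $i=j$ diagonal from the $i\neq j$ off-diagonal contributions, and peeling off $|S_0|^2=(n+1)^2$ on the right yields
\[
p_n\;=\;\frac{n+1}{2n}\;+\;\frac{1}{2n}\sum_{k=1}^{n-1}\bigl|S_k\bigr|^2,
\]
which is a manifest FSOS decomposition of $p_n$ (the first summand is $(\sqrt{(n+1)/(2n)}\,\chi_0)^{\ast}(\sqrt{(n+1)/(2n)}\,\chi_0)$).

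Finally, I would read off the Fourier support of the decomposition: each $S_k$ with $1\le k\le n-1$ is supported on the $n+1$ characters $\{\chi_i^{(k)}:1\le i\le n+1\}$, while the constant term is supported on $\chi_0$. Hence all squares in the decomposition are supported in
\[
T\;=\;\{\chi_0\}\;\cup\;\bigl\{\chi_i^{(k)}\;:\;1\le i\le n+1,\;1\le k\le n-1\bigr\},
\]
which has cardinality $1+(n+1)(n-1)=n^2$, giving the desired $O(n^2)$ bound on the FSOS sparsity of $p_n$. The only genuinely nontrivial ingredient is the Parseval identity for $\sum_{i,j}\operatorname{Eqv}(x_i,x_j)$, a direct consequence of the orthogonality relation $\sum_{k=0}^{n-1}\omega^{km}=n$ when $m\equiv 0\pmod n$ and $0$ otherwise; the remaining steps are indexing bookkeeping.
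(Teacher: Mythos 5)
Your proposal is correct and arrives at exactly the same FSOS decomposition as the paper, namely $p_n = \tfrac{n+1}{2n} + \tfrac{1}{2n}\sum_{k=1}^{n-1}\bigl|\sum_{i=1}^{n+1}\chi_k(x_i)\bigr|^2$; the only cosmetic difference is that you organize the algebra as a Parseval identity on the full double sum $\sum_{i,j}\operatorname{Eqv}(x_i,x_j)$ and then peel off the diagonal and the $k=0$ term, whereas the paper works directly from the $i<j$ sum and the identity $|\sum_i\chi_k(x_i)|^2 = (n+1) + \sum_{i\neq j}\chi_k(x_i)\chi_{n-k}(x_j)$. You also helpfully supply two details the paper leaves implicit: the explicit equivalence between satisfiability of $\overline{\operatorname{PHP}_n^{n+1}}$ and the existence of a tuple with pairwise distinct coordinates, and the exact support count $|T| = 1 + (n+1)(n-1) = n^2$ rather than merely ``$O(n^2)$''.
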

\begin{proof}
Let $\chi_k(x)=\exp(\frac{2\pi i kx}{n})$ and let $\operatorname{NOR}$ be the function defined on $\mathbb{Z}_n$:
\[\operatorname{NOR}(x)=\begin{cases}
                                                                     1, & \mbox{if } x=0, \\
                                                                                  0, & \mbox{otherwise}.
\end{cases}
\]
By the inner product $\langle\chi_k, \operatorname{NOR}\rangle = \sum_{x \in \mathbb{Z}_n}\operatorname{NOR}(x) \chi_k(x)=\chi_k(0)=1$, we can conclude that  $\operatorname{NOR}=\frac{1}{n}\sum_{k=1}^n \chi_k$. It is easy to verify that $\operatorname{Eqv}(x,y)=\operatorname{NOR}(x-y)=\frac{1}{n}\sum_{k=1}^n \chi_k(x)\chi_{n-k}(y)$, thus
\[p_n=\frac{n+1}{2}+\frac{1}{n}\sum_{k=1}^{n-1}\sum_{1 \leq i < j \leq n+1}\chi_k(x_i)\chi_{n-k}(x_j).\]
By the fact that
\[\left|\sum_{i=1}^{n+1}\chi_k(x_i)\right|^2=n+1+\sum_{1 \leq i \neq j \leq n+1}\chi_k(x_i)\chi_{n-k}(x_j),\]
we have
\[p_n=\frac{n+1}{2n}+\sum_{k=1}^{n-1} \frac{1}{2n}\left| \sum_{i=1}^{n+1}\chi_k(x_i)   \right|^2.\]
This implies $p_n> \frac{1}{2}>0$ for all $n\in \mathbb{N}$, with an FSOS  certificate of sparsity $O(n^2)$.
\end{proof}

\subsection{\red{Sum of Hermitian squares (SOHS) of polynomials on $\mathbb{T}^n$}}\label{sec:lift}

As a counterpart of SOS for non-negative polynomials over $\mathbb{C}$, SOHS  has also been  extensively studied in polynomial optimization and mathematical physics \cite{josz2018lasserre,klep2008sums,wang2022exploiting}.  As an interesting application of sparse FSOS, we show below that one can construct an SOHS of $f\geq 0$ on $\mathbb{T}^n$ from an FSOS of $f \circ \tau $:
\[
\underbrace{\Gamma_N \times \cdots \times \Gamma_N}_{\text{$n$ copies}} \xhookrightarrow{\tau}  \mathbb{T}^n \coloneqq \underbrace{\mathbb{S}^1 \times \cdots \times \mathbb{S}^1}_{\text{$n$ copies}}
\xrightarrow{f} \mathbb{R}_+,
\]
Here $N$ is a positive integer, $\Gamma_N = \{\exp^{\frac{2 \pi i k}{N}}\}_{k=0}^{N-1} \simeq \mathbb{Z}_N$, and $\tau$ is the natural inclusion map. It is obvious that an SOHS of $f$ provides an FSOS of $f \circ  \tau$. On the other hand,  if $N$ is chosen sufficiently large, then one can construct an SOHS of $f$ from an FSOS of $f \circ \tau $ by simply replacing $\chi_k$ by $z^k$ and  $\chi_{N-k}$ by $\overline{z}^k$ for $k<\frac{N}{2}$. 
\begin{example}
In order to  computing an SOHS of   $f(z) = 1 - \frac{1}{2}(z + \overline{z})$ on $\mathbb{S}^1$, we 
choose   $N = 6$,  and compute an FSOS of  $f\circ \tau$ on $\Gamma_6$:
 \[f\circ \tau = 1 - \frac{1}{2}\chi_1-\frac{1}{2}\chi_5 = \frac{1}{2}\left|1-\chi_1\right|^2.\]
 Replacing   $\chi_1$ by $z$, we obtain an SOHS of $f$ on $\mathbb{S}^1$: 
 \[f(z) = \frac{1}{2}(1-z)(1-\overline{z}).\]
\end{example}

Let  $I$ be the ideal generated by $\overline{x_i}x_i-1$, $i=1,2,\cdots,n$. Let $\rho$ be a  natural homomorphism. \[\rho:\mathbb{C}[x_1,\overline{x_1},\cdots,x_n,\overline{x_n}] \mapsto \mathbb{C}[x_1,\overline{x_1},\cdots,x_n,\overline{x_n}]/I.\]
 For a given polynomial   $f \in \mathbb{C}[x_1,\overline{x_1},x_2,\overline{x_2},\cdots,x_n,\overline{x_n}]$, suppose we have an SOHS of $\rho(f)$
  \[\rho(f)=\sum_{i=1}^m h_i\overline{h_i},\]
for  $h_i \in \mathbb{C}[x_1,\overline{x_1},\cdots,x_n,\overline{x_n}]/I, i=1..m$,  
then 
\[f=\sum_{i=1}^m h_i\overline{h_i}+g\]
for some $g \in I$ and $h_i \in \mathbb{C}[x_1,\overline{x_1},\cdots,x_n,\overline{x_n}]$. Since  $g(x)=0$ for all $x \in \mathbb{T}^n$, we have  \[f(x)=\sum_{i=1}^m\left|h_i(x)\right|^2, \forall x \in \mathbb{T}^n. \]
 Therefore, we can prove the  nonnegativity of $f$ on $\mathbb{T}^n$ by giving a sum of Hermitian squares of $\rho(f)$. For ease of reading, we will no longer distinguish between $f$ and $\rho(f)$ in the following text. 
 Since $x_i\overline{x_i}=1$, we require that each monomial of $f$ does not contain $x_i$ and $\overline{x_i}$ simultaneously. 
 
 For a given polynomial \[f=\sum_{\alpha,\beta \in \mathbb{N}^n}a_{\alpha,\beta} \prod_{i=1}^n x_i^{\alpha_i} \overline{x_i}^{\beta_i}\in  \mathbb{C}[x_1,\overline{x_1},\cdots,x_n,\overline{x_n}]/I,\]
 where  $a_{\alpha,\beta} \in \mathbb{C}$, we define:
\begin{itemize}
  \item  $[n]=\{0,1,2,\cdots,n-1\}$.
  \item For $i=1,2,...,n$, let   $\deg_{x_i}(f)$ equal to the maximal degree of $x_i$ and $\overline{x_i}$ in $f$.
  \item The \emph{restriction} of $f$ to  the  group $G=\mathbb{Z}_{k_1}\times \mathbb{Z}_{k_2}\times,\cdots, \mathbb{Z}_{k_n}$ is the function
   \begin{align}\label{resfG}
       f|_G:&G\mapsto \mathbb{C},\\
   f|_G(x_1,x_2,\cdots,x_n)&=\sum_{\alpha,\beta \in \mathbb{N}^n}a_{\alpha,\beta}  \prod_{i=1}^n \chi_{\alpha_i-\beta_i}(x_i)=\sum_{\alpha,\beta \in \mathbb{N}^n}a_{\alpha,\beta}  \prod_{i=1}^n \exp^{{2i\pi (\alpha_i-\beta_i) x_i}/{k_i}}.
   \end{align}
\end{itemize}

For a group function
\[g:G\mapsto \mathbb{C},~ g(x_1,x_2,\cdots,x_n)=\sum_{ \alpha \in [k_1]\times[k_2]\times\cdots \times [k_n]  }a_{\alpha} \prod_{i=1}^n\chi_{\alpha_i}(x_i),\]
 for $ \alpha \in [k_1]\times[k_2]\times\cdots \times [k_n]$,  $\chi_{\alpha}$ denotes the character $\Pi_{i=1}^n \chi_{\alpha_i}$ of the abelian group $\mathbb{Z}_{k_1}\times \mathbb{Z}_{k_2}\times,\cdots, \mathbb{Z}_{k_n}$.
We define the \emph{lift} of $g$ to be a polynomial $L(g)$ on 
$\mathbb{T}^n$:
\[\operatorname{L}(g)=\sum_{ \alpha \in [k_1]\times[k_2]\times\cdots \times [k_n]  }a_{\alpha}x_i^{\tau_{k_i}(\alpha_i)} \in \mathbb{C}[x_1,\overline{x_1},\cdots,x_n,\overline{x_n}]/I,\]
where $\tau_{k_i}(j)=j$ if $j<\frac{k_i}{2}$ and  $\tau_{k_i}(j)=j-k_i$ if $j\geq \frac{k_i}{2}$. Here $x_i^{-j}=\overline{x_i}^j$ in $\mathbb{C}[x_1,\overline{x_1},\cdots,x_n,\overline{x_n}]/I$. 
 It is clear that  the lift  of $g$  is a  linear map satisfying  $\operatorname{L}(\overline{g})=\overline{\operatorname{L}(g)}$.

The following theorem gives sufficient conditions that an FSOS of $f|_G$ on $G$ can be lifted to an SOHS of $f$ on $\mathbb{T}^n$. 
\begin{theorem}\label{Thm:lift}
  Let $f \in \mathbb{C}[x_1,\overline{x_1},\cdots,x_n,\overline{x_n}]$ be a polynomial defined on $\mathbb{T}^n$, 
  $f|_G$ be its  \emph{restriction} on  the group $G=\mathbb{Z}_{k_1}\times \mathbb{Z}_{k_2}\times,\cdots, \mathbb{Z}_{k_n}$, $S$ be a subset of $\widehat{G}$. Suppose the following conditions are satisfied: 
  \begin{enumerate}
    \item For all $i=1,2,...,n$, $k_i>4 \deg_{x_i}(f)$; \label{Left:cond-1}
    \item $f|_G=\sum_{i =1}^m |h_i|^2$ is an FSOS on the  group $G$, with $\bigcup_{i \in I} \operatorname{supp}(h_i) \subseteq S$;\label{Left:cond-2}
    \item For all $\chi_{\alpha} \in S$, and all $i=1,2,...,n$, we have $0 \leq \alpha_i<\frac{k_i}{4} $ or  $n \geq \alpha_i>\frac{3k_i}{4} $ holds.\label{Left:cond-3}
  \end{enumerate}
  Then we can lift  the polynomial  $\{h_i\}_{i =1}^m$ to give an SOHS of $f$ on $\mathbb{T}^n$.

\end{theorem}
\begin{proof}
The condition \ref{Left:cond-1} ensures that $f=\operatorname{L}(f|_G)$. 

For any given functions $h,g:G \mapsto \mathbb{C}$, suppose   $\operatorname{supp}(h),\operatorname{supp}(g) \subseteq S$, then  we have
\begin{eqnarray*}
&\operatorname{L}(h\cdot g)&=\operatorname{L}\left(\sum_{\chi_{\alpha},\chi_{\beta} \in S}\widehat{h}(\alpha)\widehat{g}(\beta)\chi_{\alpha}\chi_{\beta}\right)\\
&&=\sum_{\chi_{\alpha},\chi_{\beta} \in S}\widehat{h}(\alpha)\widehat{g}(\beta)\prod_{i=1}^n x_i^{\tau(\alpha_i)+\tau(\beta_i)}.
\end{eqnarray*}
If the condition \ref{Left:cond-3} holds,  then one can verify that 
\[\tau(\alpha_i)+\tau(\beta_i)=\tau(\alpha_i+\beta_i),\] and  $\operatorname{L}(h\cdot g)=\operatorname{L}(h)\cdot\operatorname{L}(g)$.
Therefore, if all three conditions hold,  then 
we have
\begin{align*}
f=\operatorname{L}(f|_G)=\operatorname{L}\left( \sum_{i=1}^m h_i \overline{h_i} \right)= \sum_{i=1}^m\operatorname{L}\left( h_i\right)\operatorname{L}\left( \overline{h_i}\right)= \sum_{i=1}^m\operatorname{L}\left( h_i\right)\overline{\operatorname{L}\left( {h_i}\right)}.
\end{align*}
\end{proof}

\begin{remark}\label{rmk6.5}
As an important application of SOHS of polynomials on $\mathbb{T}^n$, one can apply it to verify  that a polynomial $f(x_1,\ldots, x_n)$ is nonnegative over intervals:  we can first map the intervals to $[-2,2]^n$, and then   choose properly a product group  $G$ to    construct  an SOHS of $f(z_1+\overline{z_1},\ldots, z_n+\overline{z_n})$ on  $\mathbb{T}^n$.  Comparing  with   known methods (based on computing SOS over constraints),    this new  method  may provides us a more concise  certificate of nonnegativity of polynomials on  given intervals. 
\end{remark}

We show below  that a sparse  FSOS on finite abelian  groups can be used to certify the nonnegativity of  the Motzkin polynomial on $[-2,2]\times [-2,2]$.

\begin{example}\label{motzkin}
Let $M(x,y)=x^4y^2+x^2y^4-3x^2y^2+1$ be the Motzkin polynomial. It is well-known that $M$ has no SOS over reals~\cite{KLYZ09,motzkin1967arithmetic}. However,  we can compute an SOS of $M(x,y)$ subject to constraints  $4-x^2 \geq 0$ and  $4-y^2\geq 0$. The  SOS of $M$ on $[-2,2]\times [-2,2]$  computed by TSSOS~\cite{wang2021tssos}\footnote{We thank Jie Wang for his help} has sparsity $9$:
\[
M(x,y) = \sum_{j=1}^3 v_j Q_j v_j^\tp + (4-x^2) v_1 Q_4 v_1^\tp  + (4-y^2) v_1 Q_5 v_1^\tp,
\]
where $v_1 = [1, x^2, y^2], v_2 = [x, x^3, xy^2], v_3 = [y, x^2y, y^3]
$ and
\begin{align*}
Q_1 &= \begin{bmatrix}
0.114 & -0.057 &-0.057\\
 -0.057 & 0.0425 & 0.014\\
-0.057 & 0.014 & 0.043
\end{bmatrix}, Q_2 = \begin{bmatrix}
1.111 & -0.277 & -0.834 \\ -0.277 & 0.074 & 0.203 \\ -0.834  & 0.203 & 0.630
\end{bmatrix},\\
Q_3 &= \begin{bmatrix}
1.111 & -0.834  & -0.277\\ -0.834 & 0.630 & 0.203\\ -0.277 & 0.203 & 0.074
\end{bmatrix}, 
Q_4 =\begin{bmatrix}
0.111 &-0.087 &-0.024 \\
-0.087 &0.074 &0.013\\ -0.024 &0.013 &0.011
\end{bmatrix},\\
Q_5 &= \begin{bmatrix}
0.111 &-0.024 &-0.087 \\
-0.024 &0.011 &0.013\\
-0.087 &0.013 &0.074
\end{bmatrix}.
\end{align*}

Our new method works as follows: 
First,  we substitute of $x,y$ by  $x=z_1+\overline{z_1}, y=z_2+ \overline{z_2}$ and consider 
\[
f(z_1, z_2) \coloneqq M(z_1 + \overline{z_1}, z_2 + \overline{z_2}), \quad z_1,z_2 \in \mathbb{S}^1.
\]
Then we take $N = 8$ and compute a sparse FSOS of the function $f\circ \tau$ on $\Gamma_{8} \times \Gamma_{8}$ by Algorithm~\ref{alg:4}, which   can be lifted  further to give an  SOHS of $f$ on $\mathbb{T}^2$:
\[
f(z_1,z_2)= \left|z_1^2z_2^2 + \overline{z_1}^2 + \overline{z_2}^2 + z_1^2\overline{z_2}^2 + 2z_1^2 + z_2^2 + 2\right|^2.
\]
In particular, we obtain a rank one SOHS of $M(z_1 + \overline{z_1}, z_2 + \overline{z_2})$ for $z_1,z_2 \in \mathbb{S}^1$ of sparsity $7$, which proves that $M(x,y)\geq 0$  on $[-2,2] \times [-2,2]$ as 
\[
M(x,y) = f( e^{i \theta}, e^{i \psi}) = \left|e^{2(\theta + \psi)i } + e^{-2\theta i}  + e^{-2\psi i}  + e^{2(\theta - \psi)i}  + 2e^{2\theta i}  + e^{2\psi i}  + 2\right|^2,
\]
where $\theta = \arccos(x/2), \psi = \arccos(y/2)$ for $x, y \in [-2,2]$. 

\end{example}

 We denote $\deg_i(\chi_{\alpha})= \min\{\alpha_i,k_i-\alpha_i\}$ as the  degree of the i-th component of  $\chi_{\alpha}$.  According to Theorem ~\ref{Thm:lift}, a low-degree FSOS can always be lifted  to an  SOHS  of $f\geq 0$ on $\mathbb{T}^n$. On the other hand, this theorem also tells us that  if   $f$ is not nonnegative on $ \mathbb{T}^n$, then the restriction of $f$ on any finite abelian  group $G$  exists no FSOS with  $\deg_i(\chi_{\alpha}) \leq \frac{k_i}{4}$  for all $i \in [n]$.

\if
Let $g$ be a function on group  $G=\mathbb{Z}_{k_1}\times \mathbb{Z}_{k_2}\times\cdots\times \mathbb{Z}_{k_n}$, $f$ is the lift of $g$ , such that:
    \begin{enumerate}
    \item For all $i=1,2,...,n$, $k_i>4 \deg_{x_i}(f)$; \label{contrapositive:cond-1}
    \item $f$ is not nonnegative on $\mathbb{T}^n$. \label{contrapositive:cond-2}
    \end{enumerate}
    Then $g$ has no FSOS with support $S=\{\chi_{\alpha}:0 \leq \alpha_i<\frac{k_i}{4}   \text{ or }  n \geq \alpha_i>\frac{3k_i}{4} \}$.
\fi

In \cite{blekheranm2016sums}, the authors provide nonnegative  quadratic functions on group $\mathbb{Z}_2^n$,  which admit no SOS with degree less than $\frac{n}{2}$.  We  present below a similar example. 

\begin{example}
\label{coro:linear-function-with-no-low-degree-FSOS}
For any $n\in \mathbb{N}$  with $n>4$, the function
\[g: \mathbb{Z}_n \to \mathbb{C},~x \mapsto 2\cos\left(\frac{2\pi}{2n}\right)-\exp^{\frac{2\pi i}{2n}}\chi_{1}(x)-\exp^{-\frac{2\pi i}{2n}}\chi_{-1}(x) \]
 is nonnegative on $\mathbb{Z}_n$,  but has no FSOS with support
\begin{equation}\label{ex6.6}
\{ \chi_{\lfloor -\frac{n}{4} \rfloor},\chi_{\lfloor -\frac{n}{4} \rfloor+1},\ldots ,\chi_{\lfloor \frac{n}{4} \rfloor-1},\chi_{\lfloor \frac{n}{4} \rfloor}\},
\end{equation}
where $\chi_k(x)=\exp^{2i\pi k x/n}$, and $\chi_{-k}=\chi_{n-k}$. 
Otherwise, according to Theorem \ref{Thm:lift},  an FSOS of $g$ satisfying (\ref{ex6.6}) can be lifted to an SOHS of 
$ L(g)=2\cos\left(\frac{2\pi}{2n}\right)-\exp^{\frac{2\pi i}{2n}}x-\exp^{-\frac{2\pi i}{2n}}\overline{x},$  which contradict to the fact that 
\[L(g)\left(\exp^{-\frac{2\pi i}{2n}}\right)=2\cos\left(\frac{2\pi}{2n}\right)-2<0,  ~{\text{for}}~ n>4.\]

\end{example}

This example can be equivalently described as: for each integer $n$,  the polynomial optimization problem 
 \begin{eqnarray*}
 \nonumber
\begin{aligned}
&\min_{x \in \mathbb{C}}   && 2\cos\left(\frac{2\pi}{2n}\right)-\exp^{\frac{2\pi i}{2n}}x-\exp^{-\frac{2\pi i}{2n}}\overline{x}\\
&~\text{s.t.}  && x^n=1
\end{aligned}
\end{eqnarray*}
has no SOS certification of degree less than or equal to  $\frac{n}{4}$.

\nocite{}
\bibliographystyle{elsarticle-num}
\bibliography{optimization}



\end{document}